\documentclass[11pt]{amsart}
\usepackage{mathptmx}
\usepackage{amscd,amssymb,amsmath,amsthm,amsfonts}
\usepackage{graphicx}

\addtolength{\textwidth}{20mm}
\addtolength{\oddsidemargin}{-10mm}
\addtolength{\evensidemargin}{-12mm}
\addtolength{\topmargin}{-5mm}
\addtolength{\textheight}{10mm}
\addtolength{\footskip}{1mm}

\newtheorem{theorem}[equation]{Theorem}
\newtheorem{prop}[equation]{Proposition}
\newtheorem{lemma}[equation]{Lemma}
\newtheorem{cor}[equation]{Corollary}

\theoremstyle{remark}
\newtheorem{remark}[equation]{Remark} 
\theoremstyle{definition}

\newtheorem{example}[equation]{Example}
\numberwithin{equation}{subsection}

\newtheorem{open}{Open Problem}

\DeclareMathAlphabet{\matheur}{U}{eur}{m}{n}
\DeclareMathAlphabet{\matheus}{U}{eus}{m}{n}
\DeclareMathAlphabet{\matheuf}{U}{euf}{m}{n}

\newcommand{\abs}[1]{\left\lvert#1\right\rvert}

\newcommand{\Ric}{\mathit{Ricci}}
\newcommand{\vol}{\mathit{Vol}}
%
%


\DeclareMathOperator{\dist}{dist}
\DeclareMathOperator{\inj}{inj}

\DeclareMathOperator{\card}{card}
\DeclareMathOperator{\mult}{mult}
\DeclareMathOperator{\rad}{rad}
\usepackage{hyperref}

\author{Asma Hassannezhad}
\address{Max-Planck Institute for Mathematics, Vivatsgasse 7, 53111 Bonn, Germany}
\email{\tt  hassannezhad@mpim-bonn.mpg.de }

\author{Gerasim Kokarev}
\address{Mathematisches Institut der Universit\"at M\"unchen, Theresienstr. 39, D-80333 M\"unchen, Germany}
\email{\tt Gerasim.Kokarev@math.lmu.de}
\curraddr{School of Mathematics, The University of Leeds, Leeds, LS2 9JT, UK}
\email{G.Kokarev@leeds.ac.uk}

\author{Iosif Polterovich}
\address{D\'epartement de math\'ematiques et de statistique, Universit\'e de Montr\'eal, CP 6128 succ Centre-Ville, Montr\'eal, QC H3C 3J7, Canada}
\email{iossif@dms.umontreal.ca}

\keywords{Laplace operator,  Riemannian manifold, eigenvalue inequalities, counting function}
\subjclass[2010]{58J50, 35P15}

\title{Eigenvalue inequalities on Riemannian manifolds with a lower Ricci curvature bound}

\begin{document}

\begin{abstract}
We  revisit classical eigenvalue inequalities due to  Buser, Cheng,  and Gromov  on closed Riemannian manifolds, and prove the versions of these results  for the Dirichlet and Neumann boundary value problems. Eigenvalue multiplicity bounds and related open problems are also discussed.

\end{abstract}

\maketitle
\section{Statements and discussion}
\label{results}
\subsection{Notation and preliminaries}
Let $(M,g)$ be a compact $n$-dimensional  Riemannian manifold with or without boundary, and let
$$
0=\lambda_0(g)<\lambda_1(g)\leqslant\lambda_2(g)\leqslant\ldots\leqslant\lambda_k(g)\leqslant\ldots
$$ 
be the eigenvalues of the Laplace-Beltrami operator on $M$. If the boundary $\partial M$ is non-empty we assume for now that the Neumann boundary conditions are imposed. Later we also consider the Dirichlet eigenvalue problem; its eigenvalues are denoted by $\nu_k(g)$.  Recall that the eigenvalue counting function $N_g(\lambda)$ is defined for any $\lambda> 0$ as the number of  Laplace eigenvalues, counted with multiplicity, that are strictly less than $\lambda$. By the celebrated Weyl's law the counting function satisfies the following asymptotics:
\begin{equation}
\label{Weyl}
N_g(\lambda)\sim\frac{\omega_n}{(2\pi)^n}\vol_g(M)\lambda^{n/2}\qquad\text{as}\quad\lambda\to+\infty,
\end{equation}
where $\vol_g(M)$ is the volume of $M$ and $\omega_n$ is the volume of a unit ball in the Euclidean space~$\mathbb R^n$, see~\cite{SV} for the refined asymptotics and other developments in the subject.  By $m_k(g)$ we denote the multiplicity of the $k$th eigenvalue $\lambda_k(g)$. Clearly, we have $m_k(g)=N_g(\lambda_k+0)-N_g(\lambda_k)$, and hence, $m_k(g)=o(\lambda_k^{n/2})$ as $k\to +\infty$.

The purpose of this paper is three-fold. First, we revisit classical lower bounds for Laplace eigenvalues on closed Riemannian manifolds due to Gromov and Buser, and give an alternative unified approach to these statements. It avoids delicate isoperimetric arguments used in the original proofs, and uses only the Neumann-Poincar\'e inequality and geometric estimates for the cardinality of certain coverings. The advantage of our argument is that it carries over directly to the boundary value problems for geodesically convex domains, and yields rather explicit eigenvalue bounds, which appear to be new. Next, we turn our attention to the eigenvalue upper bounds originally obtained by  Cheng and Buser on closed manifolds. Some of their versions for boundary value problems also appear to be missing in the literature, and we fill this gap by presenting such results. Finally, we discuss eigenvalue multiplicity bounds on Riemannian manifolds, showing, for example, that for geodesically convex compact domains in complete manifolds of non-negative Ricci curvature the multiplicities $m_k(g)$ of Neumann eigenvalues are bounded in terms of the dimension and the index $k$ only. We end Section~\ref{results} with a few related open problems. Section~\ref{prem} contains the necessary background material, and the proofs of lower and upper eigenvalue bounds appear in Sections~\ref{proofs:lower} and~\ref{proofs:upper}  respectively.

\subsection{Lower eigenvalue bounds: Gromov and Buser revisited}
\label{section:lower}
Let $M$ be a closed manifold of non-negative Ricci curvature. A classical result by Li and Yau~\cite{LY80} says that the first Laplace eigenvalue $\lambda_1(g)$ of $M$ satisfies the inequality $\lambda_1(g)\geqslant \pi^2/(4d^2)$, where $d$ is the diameter of $M$. Later it has been improved by Zhong and Yang~\cite{ZY84} to the estimate $\lambda_1(g)\geqslant \pi^2/d^2$. For more general closed manifolds the following inequalities for all Laplace eigenvalues hold.
\begin{theorem}
\label{gromov}
Let $(M,g)$ be a closed Riemannian manifold whose Ricci curvature satisfies the bound $\Ric\geqslant -(n-1)\kappa$, where $\kappa\geqslant 0$. Then there exist constants $C_i$, $i=1,\ldots,3$, depending on the dimension $n$ of $M$  only,  such that:
\begin{equation}
\label{leb:gro}
\lambda_k(g)\geqslant C_1^{1+d\sqrt{\kappa}}d^{-2}k^{2/n}\qquad\text{for any}\quad k\geqslant 1,
\end{equation}
and 
\begin{equation}
\label{leb:bu}
\lambda_k(g)\geqslant C_2\vol_g(M)^{-2/n}k^{2/n} \qquad\text{for any}\quad k\geqslant 3C_3\vol_g(M)\max\{\kappa^{n/2},\inj^{-n}\},
\end{equation}
where $d$ and $\inj$ are the diameter and the injectivity radius of $M$ respectively.
\end{theorem}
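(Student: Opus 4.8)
The plan is to derive both inequalities from an upper bound on the counting function $N_g(\lambda)$, by a covering-plus-averaging argument in the spirit of Li--Yau and Gromov but organized around the local Neumann--Poincar\'e inequality instead of isoperimetry. Fix $\Lambda>0$ and a radius $r>0$, and take a maximal $r$-separated set $\{x_i\}_{i=1}^N\subset M$, so that the balls $B(x_i,r)$ cover $M$, the balls $B(x_i,r/2)$ are pairwise disjoint, and a packing estimate from the Bishop--Gromov inequality bounds the multiplicity of the covering $\{B(x_i,r)\}$ by some $\theta=\theta(n,\kappa,r)$. Let $E_\Lambda=\bigoplus_{\lambda_j(g)<\Lambda}\ker(\Delta_g-\lambda_j(g))$, so that $\dim E_\Lambda=N_g(\Lambda)$ and every nonzero $u\in E_\Lambda$ has Rayleigh quotient strictly below $\Lambda$, and consider the averaging map $\Phi\colon E_\Lambda\to\mathbb R^N$ sending $u$ to the vector of its mean values $\bar u_i$ over the balls $B(x_i,r)$. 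If $u\in\ker\Phi$, then using that the $B(x_i,r)$ cover $M$ with multiplicity at most $\theta$ and applying the local Poincar\'e inequality recalled in Section~\ref{prem} on each ball, with constant $P=P(n,\kappa,r)$, one gets
\[
\|u\|_{L^2(M)}^2\leqslant\sum_i\int_{B(x_i,r)}|u|^2\,dv_g=\sum_i\int_{B(x_i,r)}|u-\bar u_i|^2\,dv_g\leqslant P\sum_i\int_{B(x_i,r)}|\nabla u|^2\,dv_g\leqslant\theta P\,\|\nabla u\|_{L^2(M)}^2 ,
\]
which together with $\|\nabla u\|_{L^2(M)}^2<\Lambda\,\|u\|_{L^2(M)}^2$ forces $u=0$ as soon as $\Lambda\leqslant(\theta P)^{-1}$. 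Hence $\Phi$ is injective and $N_g(\Lambda)\leqslant N$ in that range, so if $r$ is chosen with $N\leqslant k$ then $\lambda_k(g)\geqslant(\theta P)^{-1}$ (otherwise $N_g((\theta P)^{-1})\geqslant k+1>N$).

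For \eqref{leb:gro} I would bound $N\leqslant V_\kappa(d)/V_\kappa(r/2)\leqslant(2d/r)^n e^{(n-1)d\sqrt\kappa}$ by Bishop--Gromov comparison with the model space of curvature $-\kappa$ (using $\vol_g(B(x_i,d))=\vol_g(M)$ since $d$ is the diameter), so that taking $r$ a fixed multiple of $d\,k^{-1/n}e^{(n-1)d\sqrt\kappa/n}$ forces $N\leqslant k$; combined with $\theta\leqslant C(n)^{1+d\sqrt\kappa}$, with $P(n,\kappa,r)\leqslant C(n)\,r^2 e^{C(n)r\sqrt\kappa}$, and with $r\leqslant d$, this gives $\lambda_k(g)\geqslant(\theta P)^{-1}\geqslant C_1^{1+d\sqrt\kappa}d^{-2}k^{2/n}$ after all the curvature dependence is collected into a single factor $A(n)^{d\sqrt\kappa}$ and absorbed by shrinking $C_1$; the finitely many small $k$ for which the prescribed $r$ exceeds $d$ are dealt with by taking $r=d$ (a single ball) and using $\lambda_k(g)\geqslant\lambda_1(g)$. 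For \eqref{leb:bu} I would instead use the sharper packing bound $N\leqslant\vol_g(M)/\inf_i\vol_g(B(x_i,r/2))$ together with a lower bound $\vol_g(B(x,\rho))\geqslant c(n)\rho^n$ valid for $\rho$ up to a fixed fraction of $\inj$, which gives $N\leqslant C(n)\,\vol_g(M)\,r^{-n}$; choosing $r$ a fixed multiple of $\vol_g(M)^{1/n}k^{-1/n}$ then makes $N\leqslant k$, while the conditions needed for this choice to be admissible and for $\theta\leqslant C(n)$ and $P\leqslant C(n)r^2$ to hold --- namely $r\lesssim\inj$ and $r\sqrt\kappa\lesssim1$ --- translate exactly into the stated hypothesis $k\geqslant 3C_3\vol_g(M)\max\{\kappa^{n/2},\inj^{-n}\}$, and then $\lambda_k(g)\geqslant(\theta P)^{-1}\geqslant C_2\vol_g(M)^{-2/n}k^{2/n}$.

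The steps I expect to be routine are the Bishop--Gromov volume comparison, the resulting packing and multiplicity estimates, and the small-ball volume lower bound under an injectivity radius bound. The point requiring care is the local Neumann--Poincar\'e inequality on geodesic balls and the honest bookkeeping of its constant: geodesic balls need be neither convex nor smoothly bounded, so one must invoke Buser's form of the inequality, and --- crucially for the shape of \eqref{leb:gro} --- one has to verify that its dependence on the curvature bound enters only through an exponential factor $e^{O(r\sqrt\kappa)}$, which for $r\leqslant d$ is exactly what makes the base $C_1$ purely dimensional. Once this is available, the rest is the elementary optimization over $r$ and the merging of the dimensional and curvature constants into the stated form, together with the verification of the threshold on $k$ in \eqref{leb:bu}.
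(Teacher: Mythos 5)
Your proposal follows essentially the same strategy as the paper: an averaging map on the span of low eigenfunctions over a maximal $r$-separated net, injectivity forced by a local Neumann--Poincar\'e inequality together with a multiplicity bound for the covering, and then Bishop--Gromov resp. a Croke-type volume lower bound to control the cardinality of the net. The one technical divergence is that you invoke Buser's single-ball Poincar\'e inequality on $B(x_i,r)$, whereas the paper deliberately avoids the delicate regularity issues you flag (geodesic balls being neither convex nor smooth) by working with the weaker ``doubled-ball'' version $\int_{B_R}|u-u_R|^2 \leqslant C_N R^2 e^{(n-1)R\sqrt{\kappa}}\int_{B_{2R}}|\nabla u|^2$, derived cleanly from the Cheeger--Colding segment inequality; this shifts the bookkeeping from regularity of a single ball to the overlap multiplicity of the doubled balls $B(x_i,2r)$, which is elementary from Bishop--Gromov and is exactly what Lemma~\ref{cp:l1}(ii) supplies. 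Either route closes the argument, but the paper's choice is what makes the whole proof self-contained without appealing to Buser's isoperimetric machinery.
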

Inequality~\eqref{leb:gro} is due to Gromov~\cite[Appendix~C]{Gro}. Motivated by Weyl's law~\eqref{Weyl}, he also poses a question whether there is an asymptotically sharp lower bound in terms of volume. This question has been answered by Buser who proved inequality~\eqref{leb:bu}, which however has been stated in~\cite[Theorem~6.2]{Bu82} in a slightly different form.  Note  that the hypothesis on the index $k$ in~\eqref{leb:bu} is necessary: for any given integer $k$ no geometry-free lower bound for the renormalised eigenvalue $\lambda_k(g)\vol_g(M)^{2/n}$ can hold. Indeed, the standard examples of manifolds with long necks (the connected sums of the so-called Cheeger dumbbells) show that there are sequences of metrics whose $k$th renormalised eigenvalues converge to zero. Moreover, as the examples of long thin flat tori show, the appearance of the injectivity radius in the hypothesis on the index $k$ also can not be easily removed. A number of related eigenvalue bounds have also been obtained by Li and Yau~\cite{LY80}, and Donnelly and Li~\cite{DL82}.

Both arguments by Gromov and Buser use methods based on isoperimetric inequalities: in the former case it is the circle of ideas around Levy's isoperimetric inequality, and in the latter -- the estimate for the Cheeger constant. The lower eigenvalue bounds above can be  re-written in the form of upper bounds for the counting function $N_g(\lambda)$. In particular, Gromov's bound~\eqref{leb:gro} is equivalent to the inequality
\begin{equation}
\label{leb:gro1}
N_g(\lambda)\leqslant \max\{C_4^{1+d\sqrt{\kappa}}d^n{\lambda}^{n/2}, 1\}\qquad\text{for any}\quad \lambda \geqslant 0,
\end{equation}
and Buser's inequality~\eqref{leb:bu}
is a consequence of 
\begin{equation}
\label{improved}
\quad N_g(\lambda)\leqslant C_3\vol_g(M)({\lambda}^{n/2}+\kappa^{n/2}+\inj^{-n}) \qquad\text{for any}\quad \lambda \geqslant 0.
\end{equation}
In Section~\ref{proofs:lower} we give a rather direct argument for the inequalities~\eqref{leb:gro1} and~\eqref{improved} that allows to bound the values of the counting function via the cardinality of an appropriate covering by metric balls, and avoids using  isoperimetric inequalities as in~\cite{Bu82,Gro}. We also obtain versions of these inequalities for boundary value problems, which we discuss now.

Suppose that $(M,g)$ is a complete Riemannian manifold whose Ricci curvature is bounded below $\Ric\geqslant -(n-1)\kappa$, where $\kappa\geqslant 0$, and $\Omega\subset M$ is a compact domain with a Lipschitz boundary. 
Recall that a domain with a  smooth boundary is said to satisfy the {\it interior rolling $\delta$-ball condition} if for any $x\in\partial\Omega$ there exists a ball $B$ of radius $\delta$ contained in $\Omega$ that touches the boundary $\partial\Omega$ at the point $x$, that is $B\subset\Omega$ and  $B\cap\partial\Omega=\{x\}$.
We define the {\it maximal radius} $\rad(\Omega)$ of an interior rolling ball as the supremum of $\delta>0$ such that $\Omega$ satisfies the interior rolling $\delta$-ball condition; equivalently, it can be defined as
$$
\rad(\Omega)=\inf_{x\in\partial\Omega}\sup\{r>0:\text{ there exists } B(z,r)\subset\Omega\text{ tangent to }\partial\Omega\text{ at }x\}.
$$
Finally, below by the injectivity radius $\inj(\Omega)$ of a domain $\Omega\subset M$ we mean the infimum of the injectivity radii $\inj(p)$ of the ambient manifold $M$ as the point $p$ ranges over $\Omega$.

The following theorem is a version of the Gromov and Buser eigenvalue bounds for the Neumann eigenvalue problem. For the convenience of future references we state it in the form of upper bounds for the counting function.
\begin{theorem}
\label{neumann:gromov}
Let $(M,g)$ be a complete Riemannian manifold whose Ricci curvature is bounded below $\Ric\geqslant -(n-1)\kappa$, where $\kappa\geqslant 0$, and $\Omega\subset M$ be a geodesically convex  precompact  domain with Lipschitz boundary. Then the counting function for the Neumann eigenvalue problem on $\Omega$ satisfies inequality~\eqref{leb:gro1}, and hence the eigenvalues satisfy inequality~\eqref{leb:gro}, with $d=d(\Omega)$ being the diameter of the domain. In addition, if the boundary of $\Omega$ is smooth, then  the counting function $N_g(\lambda)$ also satisfies the inequality
\begin{equation}
\label{neumann:volume}
N_g(\lambda)\leqslant C_{5}\vol_g(\Omega)(\lambda^{n/2}+\kappa^{n/2}+\inj(\Omega)^{-n}+\rad(\Omega)^{-n}) \qquad\text{for any}\quad \lambda \geqslant 0,
\end{equation}
where $\rad(\Omega)$ is the maximal radius of an interior rolling ball, $\inj(\Omega)$ is the infimum of the injectivity radii over $\Omega$, and the constant $C_5$ depends on the dimension of $M$ only.
\end{theorem}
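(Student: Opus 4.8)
\emph{Strategy and the injectivity argument.} The plan is to bound $N_g(\lambda)$ by the number of balls in a sufficiently fine covering of $\Omega$, the link between the two being the Neumann--Poincar\'e inequality on small pieces combined with a linear-algebra argument. Fix $\lambda>0$ and let $E$ be the span of the Neumann eigenfunctions with eigenvalue strictly less than $\lambda$, so that $\dim E=N_g(\lambda)$ and $\int_\Omega|\nabla u|^2\leqslant\lambda\int_\Omega u^2$ for every $u\in E$. Choose a maximal $r$-separated set $x_1,\dots,x_N$ in $\Omega$; then $\{B(x_i,r)\}$ covers $\Omega$, the balls $B(x_i,r/2)$ are pairwise disjoint, and the Bishop--Gromov comparison bounds the multiplicity of the covering by $\Theta\leqslant C(n)e^{C(n)\sqrt{\kappa}\,r}$. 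Put $\Omega_i=B(x_i,r)\cap\Omega$ and let $u_i$ denote the mean of $u$ over $\Omega_i$. Since $\Omega$ is geodesically convex, any two points of $\Omega_i$ are joined by a minimizing geodesic of length $<2r$ lying in $\Omega$; this is the hypothesis under which the Neumann--Poincar\'e inequality of Section~\ref{prem} applies on $\Omega_i$, yielding
\[
\int_{\Omega_i}(u-u_i)^2\leqslant\Lambda(n,\kappa,r)\int_{\Omega_i}|\nabla u|^2,\qquad \Lambda(n,\kappa,r)\leqslant C(n)e^{C(n)\sqrt{\kappa}\,r}r^2 .
\]
Now consider the linear map $T\colon E\to\R^N$, $T(u)=(u_1,\dots,u_N)$. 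If $u\in\ker T$, then all $u_i$ vanish and, summing the Poincar\'e inequalities over the covering,
\[
\int_\Omega u^2\leqslant\sum_{i}\int_{\Omega_i}u^2=\sum_{i}\int_{\Omega_i}(u-u_i)^2\leqslant\Lambda\,\Theta\int_\Omega|\nabla u|^2\leqslant\Lambda\,\Theta\,\lambda\int_\Omega u^2 .
\]
Hence, as soon as $r$ is chosen so that $\Lambda(n,\kappa,r)\,\Theta\,\lambda<1$ — which holds once $C(n)e^{C(n)\sqrt\kappa\,r}r^2\lambda$ is below a dimensional constant — the map $T$ is injective and $N_g(\lambda)=\dim E\leqslant N$. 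It remains to choose $r$ and to estimate $N$.

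\emph{Proof of \eqref{leb:gro1}, Lipschitz boundary.} Take $r=\min\{c(n)e^{-C(n)\sqrt\kappa\,d/2}\lambda^{-1/2},\,d\}$; since $r\leqslant d$, the factor $e^{C(n)\sqrt\kappa\,r}$ is at most $e^{C(n)\sqrt\kappa\,d}$, so the admissibility condition on $r$ is met. If $r=d$, then $\Omega$ itself is the single piece $\Omega_1$, the Poincar\'e inequality gives $\lambda_1(g)\geqslant\Lambda(n,\kappa,d)^{-1}$, and hence $N_g(\lambda)\leqslant 1$. Otherwise all $x_i$ lie in a ball of radius $\leqslant 2d$, the disjoint balls $B(x_i,r/2)$ are contained in it, and comparing volumes through Bishop--Gromov gives $N\leqslant V(3d)/V(r/2)$, where $V(\rho)$ is the volume of a $\rho$-ball in the space form of constant curvature $-\kappa$. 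A routine estimate of $V$, using the chosen value of $r$, then yields $N\leqslant C_4^{1+d\sqrt\kappa}d^{n}\lambda^{n/2}$. Together with the case $N_g(\lambda)\leqslant 1$ this is \eqref{leb:gro1}, and \eqref{leb:gro} is its equivalent reformulation.

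\emph{Proof of \eqref{neumann:volume}, smooth boundary.} Now take $r=c(n)\min\{\lambda^{-1/2},\kappa^{-1/2},\inj(\Omega),\rad(\Omega)\}$ with $c(n)$ small; then $\sqrt\kappa\,r\leqslant c(n)$, so all exponential factors are $O(1)$ and the admissibility condition holds. Since the $B(x_i,r/2)$ are disjoint and contained in $\{x\in M:\dist(x,\Omega)<r/2\}$, we have $\sum_i\vol_g(B(x_i,r/2))\leqslant\vol_g(\{x:\dist(x,\Omega)<r/2\})$. As $r/2\leqslant\rad(\Omega)$, the interior rolling ball condition produces inside $\Omega$ a collar whose volume dominates that of the outward $(r/2)$-collar, so the right-hand side is $\leqslant C(n)\vol_g(\Omega)$. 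On the other hand $r/2\leqslant\tfrac12\inj(\Omega)$, so a lower volume bound for sub-injectivity-radius balls (Croke's inequality) gives $\vol_g(B(x_i,r/2))\geqslant c(n)(r/2)^n$ for every $i$. Combining, $N\leqslant C(n)\vol_g(\Omega)\,r^{-n}$, and substituting the value of $r$ produces $N\leqslant C_5\vol_g(\Omega)(\lambda^{n/2}+\kappa^{n/2}+\inj(\Omega)^{-n}+\rad(\Omega)^{-n})$, which is \eqref{neumann:volume}.

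\emph{Main obstacle.} The delicate step is the Neumann--Poincar\'e inequality on the pieces $\Omega_i=B(x_i,r)\cap\Omega$ with a constant of the right order $e^{C\sqrt\kappa\,r}r^2$: a metric ball intersected with a geodesically convex domain need not itself be geodesically convex, so one must run the Poincar\'e argument using only that the points of $\Omega_i$ are joined by short minimizing geodesics lying in $\Omega$, hence in a slightly dilated piece. This is where the geodesic convexity of $\Omega$ is indispensable. The second subtle ingredient, specific to \eqref{neumann:volume}, is the collar estimate $\vol_g(\{x:\dist(x,\Omega)<r/2\})\leqslant C(n)\vol_g(\Omega)$: without a one-sided bound on the geometry of $\partial\Omega$ the outward collar may be arbitrarily large compared with $\vol_g(\Omega)$, and it is precisely the interior rolling $\rad(\Omega)$-ball condition (together with the lower ball-volume bound, which uses $\inj(\Omega)$) that excludes this — which is why the terms $\rad(\Omega)^{-n}$ and $\inj(\Omega)^{-n}$ must appear on the right-hand side.
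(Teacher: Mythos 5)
Your overall strategy --- controlling $N_g(\lambda)$ by the cardinality of a covering of $\Omega$ by balls $B(x_i,r)$, via the Neumann--Poincar\'e inequality on the pieces $B(x_i,r)\cap\Omega$ combined with a bound on the multiplicity of the dilated covering --- is exactly the paper's, and your treatment of the Gromov-type bound~\eqref{leb:gro1} (including the choice of $\rho_0$ and the dichotomy on whether $\rho_0$ exceeds $2d$) matches the paper's proof essentially line by line. A minor slip there: as displayed, your local Poincar\'e inequality has $\Omega_i$ on both sides, whereas Proposition~\ref{np4np} places the dilated piece $B(x_i,2r)\cap\Omega$ on the gradient side; your use of the covering multiplicity $\Theta$ is only consistent with the dilated version, and you acknowledge the dilation in your closing paragraph, so this is notational rather than substantive, but the display should be corrected.

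The genuine gap is in your argument for~\eqref{neumann:volume}. You bound the cardinality $N$ by taking Croke's lower bound $\vol_g(B(x_i,r/2))\geqslant c(n)r^n$ for the full balls and then asserting
$\sum_i\vol_g(B(x_i,r/2))\leqslant\vol_g(\{x:\dist(x,\Omega)<r/2\})\leqslant C(n)\vol_g(\Omega)$,
a collar-volume estimate. That last inequality is stated but not proved, and it is not immediate: to bound the outward $(r/2)$-collar by $C(n)\vol_g(\Omega)$ one must control the Jacobian of the normal-exponential map off $\partial\Omega$, which requires combining the curvature control on $\partial\Omega$ coming from the rolling-ball condition with the ambient Ricci bound --- a nontrivial tube-formula argument. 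The paper avoids this entirely. Instead of estimating the volume of the full balls (which spill out of $\Omega$), Lemma~\ref{np:l}(ii) estimates $\vol_g(B(x_i,r/2)\cap\Omega)$ from below: the rolling-ball condition supplies a point $\tilde x_i$ with $B(\tilde x_i,r/4)\subset B(x_i,r/2)\cap\Omega$, Croke's inequality applied to this inscribed ball gives $\vol_g(B(x_i,r/2)\cap\Omega)\geqslant c(n)r^n$ once $r\leqslant\min\{\inj(\Omega),\rad(\Omega)\}$, and since the sets $B(x_i,r/2)\cap\Omega$ are disjoint subsets of $\Omega$ one gets $N\leqslant C(n)\vol_g(\Omega)r^{-n}$ with no collar estimate. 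You should replace the collar step with this inscribed-ball argument; the remainder of your proof (the choice $r=c(n)\min\{\lambda^{-1/2},\kappa^{-1/2},\inj(\Omega),\rad(\Omega)\}$ and the final substitution) then goes through and is close to the paper's.
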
 
To our knowledge, the equivalent estimates  \eqref{leb:gro} and  \eqref{leb:gro1}  have not been available in the literature for domains with Neumann boundary conditions  under such general assumptions. Previously, Li and Yau~\cite[Theorem 5.3]{LY86} showed that when $M$ has non-negative Ricci curvature and the second fundamental form of $\partial\Omega$ is non-negative definite, the Neumann eigenvalues $\lambda_k(g)$ satisfy the inequalities $\lambda_k(g)\geqslant C\cdot d^{-2}k^{2/n}$, with the constant $C$ depending only on the dimension. It is likely that the method in~\cite{LY86} can be also used to get eigenvalue lower bounds under more general hypotheses, but probably with a more implicit dependence on the diameter and the lower Ricci curvature bound, cf.~\cite{DL82,Wang97}. 

The convexity hypothesis on a domain $\Omega\subset M$ in estimates~\eqref{leb:gro} and~\eqref{leb:gro1}  can not be easily removed. Indeed, consider a Euclidean domain obtained from a  disjoint union of small balls connected by even tinier  passages. As the size of the balls tends to zero and their number increases, so that the domain remains to be contained in a ball of fixed radius, the number of eigenvalues close to zero tends to infinity, while the diameter remains bounded.

To our knowledge,  inequality~\eqref{neumann:volume} for the counting function is new even if  $M$ is a Euclidean space. Examples, obtained by smoothing long thin rectangles, show that  it fails to hold if the interior rolling ball radius $\rad(\Omega)$ is removed on the right-hand side.  However, if the manifold $M$ has a finite volume, then $\rad(\Omega)$ can be dispensed at the price of replacing the volume $\vol(\Omega)$ by the total volume $\vol(M)$, see Remark~\ref{finite}.

Finally, by the variational principle the Neumann eigenvalues are always not greater than the corresponding Dirichlet eigenvalues, and  therefore inequalities~\eqref{leb:gro1} and~\eqref{neumann:volume} hold also for the Dirichlet counting function under the assumption that $\Omega$ is geodesically convex.  A different upper bound for the Dirichlet counting function can be found in~\cite{CY}; it is a generalization of the classical results by Berezin~\cite{Be} and Li and Yau~\cite{LY83} to the setting of eigenvalue problems on Riemannian manifolds.  The bound does not assume that $\Omega$ is convex,  but involves a less explicit geometric quantity,  which could be expressed in terms of  the mean curvatures of $\Omega$ with respect to  isometric embeddings of $M$ into a Euclidean space. It is worth mentioning that  upper bounds on the eigenvalue counting function are important in  applications, such as image processing  and machine learning~\cite{JMS}.

\subsection{Upper eigenvalue bounds: extensions of Cheng and Buser}
Now we discuss the upper eigenvalue bounds on Riemannian manifolds with a lower Ricci curvature bound. We start with recalling classical results due to Cheng~\cite{Che75} and Buser~\cite{Bu79} for the closed eigenvalue problem.
\begin{theorem}
\label{ueb:closed}
Let $(M,g)$ be a closed Riemannian manifold whose Ricci curvature satisfies the bound $\Ric\geqslant -(n-1)\kappa$, where $\kappa\geqslant 0$. Then there exist constants $C_6$ and $C_7$ depending on the dimension $n$ of $M$  only,  such that:
\begin{equation}
\label{cheng}
\lambda_k(g)\leqslant \frac{(n-1)^2}{4}\kappa+C_6(k/d)^2\qquad\text{for any}\quad k\geqslant 1,
\end{equation}
and
\begin{equation}
\label{buser79}
\lambda_k(g)\leqslant \frac{(n-1)^2}{4}\kappa+C_{7}(k/\vol_g(M))^{2/n}\qquad\text{for any}\quad k\geqslant 1,
\end{equation}
where $d=d(M)$ is the diameter of $M$.
\end{theorem}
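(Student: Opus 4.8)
These are the classical upper bounds of Cheng~\cite{Che75} (inequality~\eqref{cheng}) and Buser~\cite{Bu79} (inequality~\eqref{buser79}); the plan is to recall their proofs through a common mechanism. The idea is to exhibit $k+1$ pairwise disjoint metric balls in $M$, estimate the first Dirichlet eigenvalue of each by comparison with a model ball, and invoke the min-max principle.

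Two model facts are needed. Write $B_\kappa(r)$ for a ball of radius $r$ in the simply connected space form of constant curvature $-\kappa$, and $\lambda_1^{\mathrm{Dir}}$ for the first Dirichlet eigenvalue. First, \emph{Cheng's eigenvalue comparison}: for any metric ball $B(x,r)\subset M$ one has $\lambda_1^{\mathrm{Dir}}(B(x,r))\leqslant\lambda_1^{\mathrm{Dir}}(B_\kappa(r))$. I would prove it by pulling back the radial, positive, decreasing first Dirichlet eigenfunction $\varphi$ of $B_\kappa(r)$ through the distance function $\rho(y)=\dist(x,y)$: since $\Delta_M\rho\leqslant(\Delta\rho)_\kappa$ on $B_\kappa(r)$ by the Laplacian comparison theorem, which only uses $\Ric\geqslant-(n-1)\kappa$, and since $\varphi'\leqslant 0$, one obtains $-\Delta_M(\varphi\circ\rho)\leqslant\lambda_1^{\mathrm{Dir}}(B_\kappa(r))\,(\varphi\circ\rho)$ in the distributional sense, with the singular part of $\Delta_M\rho$ along the cut locus contributing with the favourable sign; integrating against $\varphi\circ\rho$ (a Barta-type argument) gives the Rayleigh quotient bound. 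Second, the \emph{model estimate} $\lambda_1^{\mathrm{Dir}}(B_\kappa(r))\leqslant\frac{(n-1)^2}{4}\kappa+c_n r^{-2}$ with $c_n$ depending only on $n$: this is a one-variable test-function computation in the space form, transparent after the substitution $\varphi=(\sinh(\sqrt{\kappa}\,t))^{-(n-1)/2}\psi$ with $\psi(t)=\sin(\pi t/r)$, which turns the radial Rayleigh quotient into $\int_0^r((\psi')^2+Q\psi^2)\,dt / \int_0^r\psi^2\,dt$ with $Q(t)=\frac{(n-1)^2}{4}\kappa+\frac{(n-1)(n-3)}{4}\kappa\,\sinh^{-2}(\sqrt{\kappa}\,t)$, the second term contributing $O(r^{-2})$ once one uses $\sinh(\sqrt{\kappa}\,t)\geqslant\sqrt{\kappa}\,t$. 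Combining the two facts with the min-max principle: if $M$ contains $k+1$ pairwise disjoint metric balls of radius $r$, their first Dirichlet eigenfunctions have disjoint supports and span a $(k+1)$-dimensional subspace of $H^1(M)$, so $\lambda_k(g)\leqslant\lambda_1^{\mathrm{Dir}}(B_\kappa(r))\leqslant\frac{(n-1)^2}{4}\kappa+c_n r^{-2}$.

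For~\eqref{cheng} I would produce the balls along a diameter-realizing geodesic. Let $\gamma$ be a minimizing geodesic of length $d$ from $p$ to $q$, and set $x_i=\gamma(id/k)$ for $i=0,\dots,k$; since a subarc of a minimizing geodesic is minimizing, $\dist(x_i,x_j)=|i-j|d/k\geqslant d/k$, so the balls $B(x_i,d/(2k))$, $i=0,\dots,k$, are pairwise disjoint. Applying the above with $r=d/(2k)$ gives $\lambda_k(g)\leqslant\frac{(n-1)^2}{4}\kappa+4c_n(k/d)^2$, which is~\eqref{cheng}.

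For~\eqref{buser79} the balls come instead from the Bishop--Gromov volume comparison $\vol_g(B(x,\rho))\leqslant V_\kappa(\rho):=\vol B_\kappa(\rho)$: if $\vol_g(M)>k\,V_\kappa(2r)$, a maximal $2r$-separated subset of $M$ has more than $k$ points, its $2r$-balls covering $M$, which yields $k+1$ disjoint balls of radius $r$. Choosing $r$ comparable to $\min\{\kappa^{-1/2},(\vol_g(M)/k)^{1/n}\}$ satisfies the required inequality, using $V_\kappa(\rho)\leqslant C_n\rho^n$ for $\rho\lesssim\kappa^{-1/2}$, and the mechanism then yields~\eqref{buser79} in the range $k\gtrsim\kappa^{n/2}\vol_g(M)$. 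The step I expect to be the main obstacle is the complementary range, where $\vol_g(M)/k$ is large relative to $\kappa^{-n/2}$: the largest admissible radius $r=\tfrac12 V_\kappa^{-1}(\vol_g(M)/k)$ then grows only logarithmically, so this crude argument produces an error term of order $\kappa\,(\log(\vol_g(M)/k))^{-2}$ rather than the asserted $(k/\vol_g(M))^{2/n}$. Recovering the sharp constant $\frac{(n-1)^2}{4}\kappa$ together with the volume term in that range is Buser's refinement, for which I would follow~\cite{Bu79}; alternatively the last step can be replaced by a heat-kernel argument based on the Cheeger--Yau lower bound $p_M(t,x,x)\geqslant p_\kappa(t,0)$, giving $\sum_j e^{-\lambda_j(g)t}\geqslant\vol_g(M)\,p_\kappa(t,0)$, combined with a Tauberian estimate.
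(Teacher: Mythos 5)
The paper itself does not prove Theorem~\ref{ueb:closed}: both inequalities are quoted as the classical results of Cheng~\cite{Che75} and Buser~\cite{Bu79}, so there is no ``paper's own proof'' to compare against here. What the paper does prove, in Section~\ref{proofs:upper}, are the Neumann and Dirichlet analogues (Theorems~\ref{ueb:np} and~\ref{ueb:dp}), using precisely the mechanism you describe: pack disjoint balls, bound their principal Dirichlet eigenvalue either by Cheng's comparison or by plateau test-functions together with Bishop--Gromov, and invoke min--max. Your proposal is thus consistent in spirit with the methods of the paper.

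Your proof of~\eqref{cheng} is complete and correct: the $k+1$ pairwise disjoint balls on a diametral geodesic, the Barta-type proof of Cheng's comparison via the transplanted decreasing radial eigenfunction (with the cut-locus singular part of $\Delta\rho$ contributing the favourable sign), and the ground-state substitution giving the model estimate $\lambda_1^{\mathrm{Dir}}(B_\kappa(r))\leqslant\frac{(n-1)^2}{4}\kappa+c_nr^{-2}$ all check out. Your treatment of~\eqref{buser79}, however, is not a proof, and you say so: in the range $k\lesssim\kappa^{n/2}\vol_g(M)$ the packing argument only permits balls whose radius $r^*$ grows logarithmically in $\kappa^{n/2}\vol_g(M)/k$, so the error $c_n(r^*)^{-2}$ is of order $\kappa/\log^2(\cdot)$ rather than $(k/\vol_g(M))^{2/n}$, and the sharp coefficient $\frac{(n-1)^2}{4}$ on $\kappa$ is lost. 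Deferring to~\cite{Bu79} at this point is a citation, not an argument. The heat-kernel alternative you sketch is also not obviously adequate: the Cheeger--Yau bound gives a \emph{lower} bound on $\sum_j e^{-\lambda_j t}$, which by itself does not upper-bound an individual $\lambda_k$, since one has no control over the high-frequency tail $\sum_{\lambda_j>L}e^{-\lambda_j t}$ in terms of the given data; the Tauberian step would need to be made explicit and it is not clear that it closes. So the Cheng half stands on its own, but the Buser half remains a genuine gap.
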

When a manifold $(M,g)$ has non-negative Ricci curvature, Cheng proves the version of inequality~\eqref{cheng} with an explicit constant:
$$
\lambda_k(g) \leqslant \frac{4k^2 j^2_{\frac{n}{2}-1}}{d^2} < \frac{2k^2 n(n+4)}{d^2},
$$
where $j_{\frac{n}{2}-1}$ is the first zero of the Bessel function $J_{\frac{n}{2}-1}$. The striking difference about the eigenvalue inequalities in Theorem~\ref{ueb:closed} is that the power of $k$ in the second is optimal in the sense of  Weyl's law, while in the first it is not. Nevertheless, as can be seen from the examples of thin flat tori, the quadratic growth in Cheng's inequality~\eqref{cheng} can not be improved.  Note that inequalities similar to~\eqref{buser79} have been also obtained by Li and Yau~\cite{LY80} under somewhat stronger hypotheses. 

Buser's inequality~\eqref{buser79} has been generalised by Colbois and Maerten~\cite{CM08} to the Neumann eigenvalues for compact domains in complete manifolds with a lower Ricci curvature bound. More precisely, they show that there exist constants $C_{8}$ and $C_{9}$ depending on the dimension only such that for any compact domain $\Omega\subset M$ with a Lipschitz boundary the Neumann eigenvalues of $\Omega$ satisfy 
\begin{equation}
\label{cm}
\lambda_k(g)\leqslant C_{8}\kappa+C_{9}(k/\vol_g(\Omega))^{2/n}\qquad\text{for any}\quad k\geqslant 1.
\end{equation}
To complete the picture of eigenvalue upper bounds for the Neumann problem, in Section~\ref{proofs:upper} we prove the following version of Cheng's inequality~\eqref{cheng}.
\begin{theorem}
\label{ueb:np}
Let $(M,g)$ be a complete Riemannian manifold whose Ricci curvature is bounded below $\Ric\geqslant -(n-1)\kappa$, where $\kappa\geqslant 0$, and $\Omega\subset M$ be a geodesically convex  precompact  domain with Lipschitz boundary. Then there exists a constant $C_{10}$ depending on dimension $n$ of $M$ only such that the following inequality for the Neumann eigenvalues of $\Omega$ holds:
\begin{equation}
\label{ourcheng}
\lambda_k(g)\leqslant C_{10}(\kappa+(k/d)^2)\qquad\text{for any}\quad k\geqslant 1,
\end{equation}
where $d=d(\Omega)$  is the diameter of $\Omega$.
\end{theorem}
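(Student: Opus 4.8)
The plan is to run Cheng's original argument for the closed eigenvalue problem, with one modification tailored to the boundary case. On a closed manifold one fixes $k+1$ disjoint geodesic balls along a shortest segment realising the diameter and uses their first Dirichlet eigenfunctions as test functions, estimating the corresponding eigenvalues by Cheng's comparison theorem; the resulting bound is exactly of the form $C_n(\kappa+(k/d)^2)$. For a domain $\Omega$ the obstruction is that balls centred on a diametral geodesic need not lie inside $\Omega$, and the first Dirichlet eigenvalue of the sliver $B\cap\Omega$ can be arbitrarily large when $\Omega$ is thin. I would circumvent this by replacing the Dirichlet eigenfunction of the ball with a Lipschitz cone function, so that the only geometric quantity entering the estimate is a volume ratio for $B\cap\Omega$, which convexity controls; this is precisely where the convexity hypothesis is used.

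Concretely, I would first pick $p,q\in\bar\Omega$ with $\dist(p,q)=d:=d(\Omega)$ and a minimising geodesic $\gamma\colon[0,d]\to M$ joining them, which by geodesic convexity lies in $\bar\Omega$. Put $\varrho:=\min\{d/(4(k+1)),\,c_n\kappa^{-1/2}\}$, where $c_n$ is a small dimensional constant and the second term is dropped when $\kappa=0$, and choose $q_0,\dots,q_k\in\gamma$ at mutual distance $\ge 2\varrho$; this is possible since $\varrho\le d/(4(k+1))$, so that the balls $B(q_j,\varrho)$ are pairwise disjoint. As test functions take $\phi_j(x)=\bigl(1-\dist(x,q_j)/\varrho\bigr)_+$ restricted to $\Omega$; each $\phi_j$ vanishes outside $B(q_j,\varrho)$, hence lies in $H^1(\Omega)$ with no condition imposed on $\partial\Omega$, the supports are pairwise disjoint, and the $\phi_j$ span a $(k+1)$-dimensional subspace. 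Since $|\nabla\dist(\cdot,q_j)|=1$ almost everywhere one has $\int_\Omega|\nabla\phi_j|^2=\varrho^{-2}\vol(B(q_j,\varrho)\cap\Omega)$, while $\phi_j\ge 1/2$ on $B(q_j,\varrho/2)$ gives $\int_\Omega\phi_j^2\ge\tfrac14\vol(B(q_j,\varrho/2)\cap\Omega)$, so the Rayleigh quotient satisfies $R(\phi_j)\le 4\varrho^{-2}\,\vol(B(q_j,\varrho)\cap\Omega)/\vol(B(q_j,\varrho/2)\cap\Omega)$.

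The crux is to bound this volume ratio by a dimensional constant. Because $\Omega$ is geodesically convex it is star-shaped with respect to $q_j$, and writing $\vol(B(q_j,r)\cap\Omega)$ in geodesic polar coordinates at $q_j$ one checks, using the Bishop--Gromov comparison under $\Ric\ge-(n-1)\kappa$, that $r\mapsto \vol(B(q_j,r)\cap\Omega)/\vol_\kappa(r)$ is non-increasing, where $\vol_\kappa(r)$ is the volume of an $r$-ball in the space form of curvature $-\kappa$; the truncation by the radial function of $\Omega$ is harmless precisely because it is itself monotone in the radial variable, cut-locus points entering only through a null set. Hence the ratio is at most $\vol_\kappa(\varrho)/\vol_\kappa(\varrho/2)\le C_n$, the last inequality holding because $\sqrt\kappa\,\varrho\le c_n$ by the choice of $\varrho$. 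Therefore $R(\phi_j)\le C_n\varrho^{-2}$, and the variational principle gives $\lambda_k(g)\le C_n\varrho^{-2}=C_n\max\{16(k+1)^2 d^{-2},\,c_n^{-2}\kappa\}\le C_n(\kappa+(k/d)^2)$ for $k\ge 1$, which is the assertion with $C_{10}=C_n$.

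I expect the Bishop--Gromov estimate for the truncated balls $B(q_j,r)\cap\Omega$ to be the point needing the most care: one must set up geodesic polar coordinates at $q_j$, invoke convexity to see that $\Omega$ meets the injectivity domain of $q_j$ in a star-shaped set, and verify that monotonicity of the Jacobian quotient survives multiplication by the indicator of $\{t<\rho_\Omega(\xi)\}$ and integration over directions. The only other wrinkle is the two-regime choice of $\varrho$, which is comparable to $d/k$ when $k$ dominates $\sqrt\kappa\,d$ and to $\kappa^{-1/2}$ otherwise; in the second regime the $k+1$ disjoint balls still fit along $\gamma$ because $d\ge 2(k+1)\varrho$ there as well, and this is exactly what is needed to keep the volume ratio dimensional while producing the additive $\kappa$ term.
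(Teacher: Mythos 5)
Your proposal is correct and follows essentially the same route as the paper's proof: place $k+1$ points along a diametral geodesic segment (which lies in $\bar\Omega$ by convexity), take test functions supported on the resulting disjoint balls, and bound the Rayleigh quotients by a volume ratio controlled via the star-shaped Bishop--Gromov comparison (the paper's Lemma~\ref{GB}), with the two-regime choice of radius producing the additive $\kappa$ term. The only differences are cosmetic: you use tent functions where the paper uses plateau functions, and you fix the radius $\varrho$ directly as a minimum rather than working with the packing radius $\rho(k)$ and then splitting into cases.
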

In the case when $\Omega$ is a convex Euclidean domain, inequality~\eqref{ourcheng} has been obtained in~\cite{Kr}. As the following example shows the convexity hypothesis on a domain $\Omega$ in the theorem above can not be easily removed. First, note that when a domain $\Omega$ is non-convex, its diameter can be also measured using the so-called intrinsic distance on $\Omega$. Recall that it is defined as the infimum of the lengths of paths that lie in $\Omega$ and join two given points. 
\begin{example}
\label{counter}
For a given real number $R>0$  consider a surface of revolution
$$
\Sigma_R=\{(x,y,z)\in\mathbb R^3: y^2+z^2=e^{-2xR}/R^2, x\in [0,1]\}.
$$
As is shown in~\cite[Lemma~5.1]{FT00}, the first non-zero Neumann eigenvalue of $\Sigma_R$ satisfies the inequality $\lambda_1(\Sigma_R)\geqslant R^2/8$. Hence, for the first eigenvalue of the product $\Sigma_R\times [0,\delta]$ we have
$$
\lambda_1(\Sigma_R\times [0,\delta])\geqslant R^2/8\qquad\text{when }0<\delta\leqslant \sqrt{8}\pi R^{-1}.
$$
Now for a sufficiently small $\delta>0$ consider a Euclidean domain in $\mathbb R^3$
$$
\Omega_R(\delta)=\{\exp_p(tv): p\in\Sigma_R, v\text{ is a unit outward normal vector}, ~t\in[0,\delta]\},
$$
where $\exp$ denotes the exponentional map in $\mathbb R^3$. Clearly, it is quasi-isometric to the Riemannian product $\Sigma_R\times [0,\delta]$, and the quasi-isometry constant converges to $1$ as $\delta\to 0+$. Thus, for any sequence $R_\ell\to +\infty$ we may choose a sequence $\delta_\ell\to 0+$ such that the first eigenvalues of  the domains $\Omega_\ell=\Omega_{R_\ell}(\delta_\ell)$ satisfy the inequality $\lambda_1(\Omega_\ell)\geqslant R_\ell^2/16$. Note that the extrinsic diameter of $\Sigma_R$, and hence of any domain containing it, is always greater than $1$. In particular, the extrinsic diameters of $\Omega_\ell$ are bounded away from zero, and we obtain a counterexample to inequality~\eqref{ourcheng} for non-convex Euclidean domains in $\mathbb R^3$, independently of whether the notion of extrinsic or intrinsic diameter is used. It is straightforward to construct other examples of Euclidean domains in $\mathbb R^n$, where $n\geqslant 3$, with similar properties. As was mentioned to us by A. Savo~\cite{S}, there are also examples of non-convex planar domains for which Cheng's upper bound~\eqref{ourcheng} fails. All these examples are closely related to the concentration of measure phenomenon for large eigenvalues, see~\cite{CS11} for details.
\end{example}
Now we state the version of Theorem~\ref{ueb:closed} for the Dirichlet eigenvalue problem, which to our knowledge, appears to be missing in the literature. It involves the maximal radius $\rad(\Omega)$ of an interior rolling ball, and holds for domains with smooth boundary that are not necessarily convex. 
\begin{theorem}
\label{ueb:dp}
Let $(M,g)$ be a complete Riemannian manifold whose Ricci curvature is bounded below $\Ric\geqslant -(n-1)\kappa$, where $\kappa\geqslant 0$, and $\Omega\subset M$ be a precompact domain with smooth boundary. Then there exist constants $C_i$, where $i=11,\dots, 14$ depending on the dimension only such that the Dirichlet eigenvalues  $\nu_k(\Omega)$ satisfy the following inequalities:
\begin{equation}
\label{cheng:dp}
\nu_k(\Omega)\leqslant C_{11}(\kappa+\rad^{-2})+C_{12}((k+1)/\bar d)^2\qquad\text{for any}\quad k\geqslant 0,
\end{equation}
and 
\begin{equation}
\label{buser:dp}
\nu_k(\Omega)\leqslant C_{13}(\kappa+\rad^{-2})+C_{14}((k+1)/\vol(\Omega))^{2/n}\qquad\text{for any}\quad k\geqslant 0,
\end{equation}
where $\rad=\rad(\Omega)$ is the maximal radius of an interior rolling ball,  and $\bar d=\bar d(\Omega)$ is the intrinsic diameter of $\Omega$.
\end{theorem}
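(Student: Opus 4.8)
The plan is to combine the variational principle with an explicit test space built from disjoint interior balls. If one finds $k+1$ metric balls $B_0,\dots,B_k\subset\Omega$ with pairwise disjoint closures, then the first Dirichlet eigenfunctions of the $B_j$, extended by zero, span a $(k+1)$-dimensional subspace of $H^1_0(\Omega)$ on which the Rayleigh quotient is at most $\max_j\nu_1(B_j)$; hence $\nu_k(\Omega)\leqslant\max_j\nu_1(B_j)$. So the proof reduces to (i) an upper bound for $\nu_1$ of a metric ball, and (ii) a geometric lemma producing enough disjoint balls sitting inside $\Omega$.

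Part (i) is standard: by Cheng's eigenvalue comparison theorem~\cite{Che75}, the bound $\Ric\geqslant-(n-1)\kappa$ gives $\nu_1(B(x,r))\leqslant\nu_1(B_{-\kappa}(r))$ for every $x$ and $r>0$, where $B_{-\kappa}(r)$ is the geodesic $r$-ball in the simply connected space form of curvature $-\kappa$; rescaling to $\kappa=1$ and using that $\nu_1(B_{-1}(\rho))$ is non-increasing in $\rho$, stays bounded as $\rho\to\infty$, and is comparable to $\rho^{-2}$ as $\rho\to0$, one obtains $\nu_1(B(x,r))\leqslant C_n(\kappa+r^{-2})$ with $C_n$ depending only on $n$. (Alternatively one can bypass Cheng's theorem and use the test function $y\mapsto\cos(\pi\dist(x,y)/2r)$ on $B(x,r)$ together with Bishop--Gromov volume comparison, which gives the same bound when $r\lesssim\kappa^{-1/2}$ --- all that is needed below.)

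For part (ii) the interior rolling ball condition is essential. Set $\Omega_\rho=\{x\in\Omega:\dist(x,\partial\Omega)\geqslant\rho\}$; for $\rho<\rad$ the distance to $\partial\Omega$ is smooth near $\Omega\setminus\Omega_\rho$, so $\Omega_\rho$ is a compact smooth domain, it is non-empty (it contains the centres of the tangent rolling balls), and it is connected because the rolling condition yields a continuous push-in retraction of $\Omega$ onto $\Omega_\rho$. Moreover every point of $\Omega$ lies within intrinsic distance $2\rho$ of $\Omega_\rho$ (push it towards the centre of a rolling ball tangent at its nearest boundary point), so the intrinsic diameter of $\Omega_\rho$ exceeds $\bar d-4\rho$. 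To prove \eqref{cheng:dp}, take $\rho=c_n\min\{\bar d/(k+1),\rad\}$ with $c_n$ a small dimensional constant, pick $p',q'\in\Omega_\rho$ at intrinsic distance $\geqslant\bar d/2$ joined by a path in $\Omega_\rho$, and along this path select $k+1$ centres $y_j$ whose values of $\bar d_\Omega(p',\cdot)$ form an arithmetic progression with step $\sigma\in[\rho,3\rho]$; the balls $B(y_j,\sigma/3)$ then have disjoint closures, lie in $\Omega$, and part (i) gives $\nu_k(\Omega)\leqslant C_n(\kappa+\rho^{-2})\leqslant C_n(\kappa+\rad^{-2}+((k+1)/\bar d)^2)$. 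To prove \eqref{buser:dp}, take $\rho=c_n\min\{(\vol(\Omega)/(k+1))^{1/n},\rad,\kappa^{-1/2}\}$ and a maximal $2\rho$-separated subset $\{x_i\}$ of $\Omega_\rho$: the balls $B(x_i,\rho)$ are disjoint and lie in $\Omega$, and by maximality $\Omega_\rho$ is covered by the balls $B(x_i,2\rho)$, so Bishop--Gromov (using $\rho\lesssim\kappa^{-1/2}$) bounds their number below by $c_n\vol(\Omega_\rho)\rho^{-n}$; combined with part (i) this yields \eqref{buser:dp} once one knows $\vol(\Omega_\rho)\geqslant c_n\vol(\Omega)$.

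The last inequality --- a collar volume estimate at scale $\min\{\rad,\kappa^{-1/2}\}$, saying that the part of $\Omega$ within distance $\rho$ of $\partial\Omega$ is only a fixed fraction of $\vol(\Omega)$ --- is where the real work lies, and I expect it to be the main obstacle. Proving it amounts to controlling the Jacobian of the inward normal exponential map of $\partial\Omega$ on $\partial\Omega\times[0,\rho]$: the rolling ball condition bounds the principal curvatures of $\partial\Omega$ from above and, via comparison with the geodesic spheres bounding the rolling balls (Laplacian comparison, using the Ricci lower bound), bounds from above the mean curvatures of the level sets $\{\dist(\cdot,\partial\Omega)=t\}$ for $t<\rad$, which in turn controls how fast the level-set area element can decrease as one moves into $\Omega$. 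Everything else is the packing argument of part (ii) and Cheng's comparison of part (i); the convexity-free statement is bought entirely through the hypotheses on $\rad(\Omega)$, which are precisely what let one push the competing balls off the boundary.
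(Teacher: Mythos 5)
Your high-level strategy---pack disjoint interior balls and bound $\nu_k(\Omega)$ by $\max_j\nu_0(B_j)$ via domain monotonicity, then apply Cheng's comparison $\nu_0(B(x,r))\leqslant C_n(\kappa+r^{-2})$---is exactly the paper's, and part~(i) of your plan is fine. Where you diverge is in how the balls are pushed off the boundary: you work globally with the inner parallel set $\Omega_\rho=\{x:\dist(x,\partial\Omega)\geqslant\rho\}$, whereas the paper works pointwise. Concretely, the paper chooses a set of $r$-separated centres $x_i\in\Omega$ (in no way constrained to $\Omega_\rho$) and, for each $x_i$ individually, pushes in along the inward normal at the nearest boundary point to obtain $\tilde x_i$ with $B(\tilde x_i,r/4)\subset B(x_i,r/2)\cap\Omega$. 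Since the pushed-in ball sits inside the original ball, the disjointness of the $B(x_i,r/2)$ transfers automatically to the $B(\tilde x_i,r/4)$; no global structure of $\Omega_\rho$ is ever needed. For the Buser bound the paper then estimates the separation scale $\rho(k+1)$ by covering $\Omega$ itself with the balls $B(y_i,s)$ around a maximal $s$-net, so the volume inequality used is simply $\vol(\Omega)\leqslant\sum\vol_M(B(y_i,s))$, which follows from Bishop--Gromov applied to the ambient balls and uses nothing about $\Omega_\rho$ at all.

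The genuine gap in your proposal is exactly the one you flag: the collar volume estimate $\vol(\Omega_\rho)\geqslant c_n\vol(\Omega)$ at scale $\rho\lesssim\min\{\rad,\kappa^{-1/2}\}$. Your sketch (Jacobian of the inward normal exponential map, controlled by the rolling-ball curvature bound together with Laplacian comparison) only controls how fast the area element of the level sets can grow outward; turning that into a lower bound on $\vol(\Omega_\rho)$ as a fixed fraction of $\vol(\Omega)$ still requires integrating and comparing to the total volume, which is a nontrivial additional step that you have not supplied. There is a secondary soft spot as well: for the Cheng-type bound you need $p'$ and $q'$ to be joined by a path \emph{inside} $\Omega_\rho$, which requires $\Omega_\rho$ (or at least its relevant component) to be path-connected; you assert this via a ``continuous push-in retraction,'' but the nearest boundary point need not be unique and the distance function is only Lipschitz across the cut locus of $\partial\Omega$, so the continuity of the retraction is not automatic. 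Both difficulties evaporate in the paper's version precisely because the push-in is performed one centre at a time and one never has to understand $\Omega_\rho$ as a set: the rolling ball guarantees, for each fixed $x_i$ with $\dist(x_i,\partial\Omega)<r/2<\rad$, that the point $\tilde x_i$ at distance $r/2$ from the nearest boundary point along the inward normal satisfies $B(\tilde x_i,r/2)\subset\Omega$ and $\dist(x_i,\tilde x_i)<r/2$, and that is all that is used. I would recommend replacing the $\Omega_\rho$ machinery with this local device; it shortens the argument and removes the need to prove the collar estimate and the connectedness claim.
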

Since the extrinsic diameter $d(\Omega)$ is not greater than the intrinsic diameter $\bar d(\Omega)$, we conclude that estimate~\eqref{cheng:dp} holds also for the former in the place of the latter. The examples obtained by rounding long thin rectangles in the Euclidean plane  show that the inequalities in the theorem above  fail to hold even for convex domains if the quantity $\rad(\Omega)$ on the right-hand side is removed. If a domain $\Omega$ has corners, and thus $\rad(\Omega)=0$, Theorem \ref{ueb:dp} can be applied to any smooth domain contained inside $\Omega$, yielding upper bounds on $\nu_k(\Omega)$ using the domain monotonicity.

It is important to mention that the upper bounds for the Dirichlet eigenvalues in Theorem~\ref{ueb:dp} are also upper bounds for the Neumann eigenvalues. In particular, inequality~\eqref{cheng:dp} for the Neumann eigenvalues can be viewed as a version of~\eqref{ourcheng} for non-convex domains; due to Example~\ref{counter} the quantity $\rad(\Omega)$ is necessary. On the other hand, inequality~\eqref{buser:dp} does not give anything new for the Neumann problem, since a stronger inequality~\eqref{cm} due to Colbois and Maerten holds. 

The proofs of Theorems~\ref{ueb:np} and~\ref{ueb:dp} follow the original strategy, used by Cheng and Buser, and are based on versions of volume  comparison theorems. They appear in Section~\ref{proofs:upper}.

\subsection{Multiplicity bounds and related open problems}  
Recall that a classical result due to Cheng~\cite{Che76} says that the multiplicities $m_k(g)$ of the Laplace eigenvalues $\lambda_k(g)$ on a closed Riemannian surface are bounded in terms of the index $k$ and the topology of the surface. The estimate obtained by Cheng has been further improved by Besson~\cite{Be80} and Nadirashvili~\cite{Na87}, and since then 
related questions have been studied extensively in the literature, see~\cite{CC,CdV86,HMN,KKP,GK} and references therein for further details. Note that even the fact that eigenvalue multiplicities on Riemannian surfaces of fixed topology are bounded is by no means trivial, and due to the results of Colin de Verdi\`ere~\cite{CdV}, fails in higher dimensions. More precisely, in dimension $n\geqslant 3$ for any closed manifold $M$ any finite part of the spectrum can be prescribed by choosing an appropriate Riemmannian metric. 

The purpose of the remaining part of the section is to discuss multiplicity bounds for Laplace eigenvalues in terms of geometric quantities, which seem to have been unnoticed in the literature. Recall that by the definition of the counting function, the multiplicity $m_k(g)$ of the Laplace eigenvalue $\lambda_k(g)$ satisfies the inequality $m_k(g)\leqslant N_g(\lambda_k+0)$. Thus, the combination of upper bounds for the counting function and the upper bounds for the Laplace eigenvalues yields the desired bounds for the multiplicities. For the convenience of references we state them below in the form of corollaries, considering the cases of the closed, Neumann, and Dirichlet eigenvalue problems consecutively. The first statement follows by combination of Theorem~\ref{gromov}, or rather inequalities~\eqref{leb:gro1} and~\eqref{improved}, with Theorem~\ref{ueb:closed}.
\begin{cor}
\label{mbc}
Let $(M,g)$ be a closed Riemannian manifold whose Ricci curvature satisfies the bound $\Ric\geqslant -(n-1)\kappa$, where $\kappa\geqslant 0$. Then there exist constants $C_{15}$ and $C_{16}$ depending on the dimension $n$ of $M$  only,  such that the multiplicities $m_k(g)$ of the Laplace eigenvalues $\lambda_k(g)$ satisfy the inequalities
\begin{equation}
\label{mbc:diameter}
m_k(g)\leqslant C_{15}^{1+d\sqrt{\kappa}}(d\sqrt{\kappa}+k^n)\qquad\text{for any }k\geqslant 1,
\end{equation}
and
\begin{equation}
\label{mbc:volume}
m_k(g)\leqslant C_{16}(k+\vol_g(M)(\kappa^{n/2}+\inj^{-n}))\qquad\text{for any }k\geqslant 1,
\end{equation}
where $d$ and $\inj$ are the diameter and the injectivity radius of $M$ respectively.
\end{cor}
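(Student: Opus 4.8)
The plan is to follow the strategy indicated just before the statement: bound the multiplicity $m_k(g)$ by the value of the counting function immediately to the right of $\lambda_k(g)$, and then insert into this the upper bounds for $\lambda_k(g)$ from Theorem~\ref{ueb:closed}. The starting point is the elementary inequality $m_k(g)\leqslant N_g(\lambda_k+0)$, valid because the multiplicity of $\lambda_k(g)$ cannot exceed the number, counted with multiplicity, of eigenvalues that are at most $\lambda_k(g)$. Since $N_g(\lambda_k+0)=\lim_{\mu\downarrow\lambda_k}N_g(\mu)$ and the right-hand sides of~\eqref{leb:gro1} and~\eqref{improved} are continuous in $\lambda$, those estimates hold with $\lambda$ replaced by $\lambda_k(g)$; thus
\[
m_k(g)\leqslant\max\bigl\{C_4^{1+d\sqrt{\kappa}}d^{n}\lambda_k(g)^{n/2},\,1\bigr\}
\qquad\text{and}\qquad
m_k(g)\leqslant C_3\vol_g(M)\bigl(\lambda_k(g)^{n/2}+\kappa^{n/2}+\inj^{-n}\bigr).
\]

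For~\eqref{mbc:diameter} I would substitute Cheng's bound~\eqref{cheng}, raise it to the power $n/2$, and split using $(a+b)^{n/2}\leqslant 2^{n/2}(a^{n/2}+b^{n/2})$ to obtain $d^{n}\lambda_k(g)^{n/2}\leqslant 2^{n/2}\bigl(c_n(d\sqrt{\kappa})^{n}+C_6^{n/2}k^{n}\bigr)$ with $c_n=((n-1)^2/4)^{n/2}$. The only step that is not pure bookkeeping is handling the term $(d\sqrt{\kappa})^{n}$: from the elementary inequality $t^{n}\leqslant e^{(n/e)t}\leqslant(e^{n/e})^{1+t}$, valid for all $t\geqslant0$, one gets $(d\sqrt{\kappa})^{n}\leqslant(e^{n/e})^{1+d\sqrt{\kappa}}$, which is absorbed into a dimensional constant raised to the power $1+d\sqrt{\kappa}$. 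Collecting terms then yields $m_k(g)\leqslant C^{1+d\sqrt{\kappa}}(1+k^{n})$ for some dimensional $C$, and since $1+k^{n}\leqslant 2k^{n}\leqslant 2(d\sqrt{\kappa}+k^{n})$ for $k\geqslant1$ the extra factor $2$ is absorbed as well (using $1+d\sqrt{\kappa}\geqslant1$), giving~\eqref{mbc:diameter}; in fact the summand $d\sqrt{\kappa}$ on the right-hand side is not even needed. I would also note in passing that for $k\geqslant1$ one has $N_g(\lambda_k+0)\geqslant k+1\geqslant2$, so the maximum in the first displayed inequality is attained by its first argument and the case distinction is vacuous.

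The proof of~\eqref{mbc:volume} is entirely parallel, using Buser's bound~\eqref{buser79} in place of Cheng's: raising it to the power $n/2$ and splitting as before gives $\vol_g(M)\lambda_k(g)^{n/2}\leqslant 2^{n/2}\bigl(c_n\vol_g(M)\kappa^{n/2}+C_7^{n/2}k\bigr)$, and substituting this into the second displayed inequality above and grouping the contributions into a multiple of $k$ and a multiple of $\vol_g(M)(\kappa^{n/2}+\inj^{-n})$ produces~\eqref{mbc:volume} with a dimensional constant $C_{16}$. Since all the analytic ingredients are already supplied by Theorems~\ref{gromov} and~\ref{ueb:closed}, there is no genuine obstacle here; the proof is a short computation, and the only points requiring a little care are the absorption of the polynomial factor $(d\sqrt{\kappa})^{n}$ into the exponential $C_{15}^{1+d\sqrt{\kappa}}$ and the tracking of dimensional constants through the elementary $(a+b)^{n/2}$ splittings.
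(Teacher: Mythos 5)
Your proof is correct and follows exactly the approach the paper indicates: combine $m_k(g)\leqslant N_g(\lambda_k+0)$ with the counting-function bounds~\eqref{leb:gro1},~\eqref{improved} and the eigenvalue upper bounds of Theorem~\ref{ueb:closed}; the paper gives no further details, and your computations (the $(a+b)^{n/2}$ splitting, the absorption of $(d\sqrt{\kappa})^n$ into $C^{1+d\sqrt{\kappa}}$, and the observation that the maximum in~\eqref{leb:gro1} is redundant for $k\geqslant 1$) supply them correctly.
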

As a direct consequence of inequality~\eqref{mbc:diameter}, we see that for manifolds of non-negative Ricci curvature the multiplicities $m_k(g)$ are bounded in terms of the index $k$ and the dimension only. In this statement the hypothesis $\kappa=0$  can not be replaced by a weaker assumption $\kappa>0$, that is by a negative lower Ricci curvature bound. Indeed, this follows from the prescription results~\cite{CdV} together with the fact that the multiplicities $m_k(g)$ are invariant under scaling of a metric $g$. In a similar vein, Lohkamp~\cite{Lohk} shows that any finite part of spectrum can be prescribed by choosing an appropriate Riemannian metric whose volume can be normalised $\vol_g(M)=1$ and the Ricci curvature can be made negative and arbitrarily large in absolute value. This result indicates that the presence of the scale-invariant quantity $\vol(M)\kappa^{n/2}$ in inequality~\eqref{mbc:volume} is rather natural, and one may ask the following question.
\begin{open}
\label{quest1}
Apart from the index $k$ and the dimension, can the multiplicity $m_k(g)$ of a Laplace eigenvalue $\lambda_k(g)$ on a closed manifold $M$ be controlled by the volume and the lower Ricci curvature bound only? 
\end{open}
The inequalities in Corollary~\ref{mbc} have two notable differences. First, the second inequality~\eqref{mbc:volume} is geometry free for a sufficiently large index $k$ in the sense that the second term is dominated by the first one. Second, it is linear in $k$, while the growth in $k$ in inequality~\eqref{mbc:diameter} has order $n$. Concerning the growth of multiplicities in the index $k$, recall that by the result of H\"ormander~\cite{Ho} the sharp remainder estimate in Weyl's law~\eqref{Weyl} is $O(\lambda^{(n-1)/2})$, and hence,  for any given metric $g$ the quantity $m_k(g)k^{(1-n)/n}$ is bounded as $k\to+\infty$. In other words, for a sufficiently large $k$ the multiplicity $m_k(g)$ can not be greater than $C(g)\cdot k^{1-1/n}$, where $C(g)$ is a constant depending on a metric $g$. Though the dependence on the index $k$ in bound~\eqref{mbc:volume} might be satisfactory when the dimension $n$ is large, we ask the following question.
\begin{open}
In inequality~\eqref{mbc:volume} is the linear growth in $k$ the best possible? Can it  be replaced by $ k^{1-1/n}$, where $n$ is the dimension of $M$?
\end{open}
Now we state a version of Corollary~\ref{mbc} for the Neumann eigenvalue problem. It is a consequence of Theorems~\ref{neumann:gromov} and~\ref{ueb:np}, and inequality~\eqref{cm}.
\begin{cor}
\label{mbn}
Let $(M,g)$ be a complete Riemannian manifold whose Ricci curvature is bounded below $\Ric\geqslant -(n-1)\kappa$, where $\kappa\geqslant 0$, and $\Omega\subset M$ be a geodesically convex  precompact  domain with Lipschitz boundary. Then the multiplicities $m_k(g)$ of the Neumann eigenvalue problem on $\Omega$ satisfy inequality~\eqref{mbc:diameter}, with $d=d(\Omega)$ being the  diameter of the domain. In addition, if the boundary of $\Omega$ is smooth, then the multiplicities $m_k(g)$ also satisfy the inequality
\begin{equation}
\label{mbn:volume}
m_k(g)\leqslant C_{17}(k+\vol_g(\Omega)(\kappa^{n/2}+\inj^{-n}+\rad^{-n}))\qquad\text{for any }k\geqslant 1,
\end{equation}
where $\rad(\Omega)$ is the maximal radius of an interior rolling ball, $\inj(\Omega)$ is the infimum of the injectivity radii over $\Omega$, and the constant $C_{17}$ depends on the dimension of $M$ only.
\end{cor}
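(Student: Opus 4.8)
The proof is a formal combination of the counting-function upper bounds supplied by Theorem~\ref{neumann:gromov} with the eigenvalue upper bounds of Theorem~\ref{ueb:np} and inequality~\eqref{cm}, via the elementary observation that $m_k(g)\leqslant N_g(\lambda_k+0)$. The plan is the following. Since the spectrum is discrete, $N_g(\lambda_k+0)=\lim_{\varepsilon\to0+}N_g(\lambda_k+\varepsilon)$ equals the number of eigenvalues not exceeding $\lambda_k$, so in particular $m_k(g)\leqslant N_g(\lambda_k+0)$. Hence, whenever $N_g(\lambda)\leqslant F(\lambda)$ for all $\lambda\geqslant0$ with $F$ continuous, passing to the limit $\varepsilon\to0+$ gives $m_k(g)\leqslant F(\lambda_k(g))$; it then remains to feed in an upper bound for $\lambda_k(g)$ and simplify.

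For the diameter bound~\eqref{mbc:diameter}, under the stated hypotheses Theorem~\ref{neumann:gromov} yields~\eqref{leb:gro1} and Theorem~\ref{ueb:np} yields~\eqref{ourcheng}; combining them as above gives
\[
m_k(g)\leqslant\max\{C_4^{1+d\sqrt\kappa}d^n\lambda_k(g)^{n/2},\,1\}\leqslant\max\{C_4^{1+d\sqrt\kappa}C_{10}^{n/2}(d^2\kappa+k^2)^{n/2},\,1\}.
\]
Now $(d^2\kappa+k^2)^{n/2}\leqslant 2^{n/2}\bigl((d\sqrt\kappa)^n+k^n\bigr)$, and since $t^n\leqslant n!\,e^t$ for $t\geqslant0$, the term $(d\sqrt\kappa)^n$ is absorbed into a factor of the form $C^{1+d\sqrt\kappa}$; as $d\sqrt\kappa+k^n\geqslant1$ for $k\geqslant1$ (so the outer $\max$ with $1$ is harmless), the right-hand side is bounded by $C_{15}^{1+d\sqrt\kappa}(d\sqrt\kappa+k^n)$ for a suitable dimensional constant $C_{15}$, which is~\eqref{mbc:diameter}.

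For the volume bound~\eqref{mbn:volume}, assume in addition that $\partial\Omega$ is smooth. Then Theorem~\ref{neumann:gromov} gives~\eqref{neumann:volume}, while~\eqref{cm} of Colbois and Maerten applies to any Lipschitz domain, so
\[
m_k(g)\leqslant C_5\vol_g(\Omega)\bigl(\lambda_k(g)^{n/2}+\kappa^{n/2}+\inj(\Omega)^{-n}+\rad(\Omega)^{-n}\bigr).
\]
Inserting $\lambda_k(g)\leqslant C_8\kappa+C_9(k/\vol_g(\Omega))^{2/n}$ and using $(a+b)^{n/2}\leqslant 2^{n/2}(a^{n/2}+b^{n/2})$, one finds $\vol_g(\Omega)\lambda_k(g)^{n/2}\leqslant C(n)\bigl(\vol_g(\Omega)\kappa^{n/2}+k\bigr)$; collecting terms produces~\eqref{mbn:volume} with $C_{17}$ depending only on the dimension.

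There is no serious obstacle here: the substance is already contained in Theorems~\ref{neumann:gromov} and~\ref{ueb:np} and in~\eqref{cm}. The only points that require attention are the use of continuity to pass from the counting-function bound evaluated at $\lambda_k+\varepsilon$ to its value at $\lambda_k$, and the bookkeeping needed to cast the estimate of the second paragraph into precisely the stated form $C_{15}^{1+d\sqrt\kappa}(d\sqrt\kappa+k^n)$ — in particular the absorption of the polynomial factor $(d\sqrt\kappa)^n$ into the exponential term, which is what forces the somewhat unusual shape of the right-hand side.
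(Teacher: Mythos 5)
Your proposal is correct and follows the same route the paper itself indicates: it reads off $m_k(g)\leqslant N_g(\lambda_k+0)$, applies the counting-function bounds of Theorem~\ref{neumann:gromov} and the eigenvalue upper bounds of Theorem~\ref{ueb:np} and~\eqref{cm}, and carries out the resulting bookkeeping. The paper states Corollary~\ref{mbn} as an immediate consequence of those results without writing out the elementary inequalities you supply, so there is no difference in substance.
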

Following the discussion above for the eigenvalue problem on a closed manifold, we see that the multiplicities $m_k(g)$ of the Neumann eigenvalues of any geodesically convex domain $\Omega$ in the manifold of non-negative Ricci curvature are bounded in terms of the index $k$ and the dimension $n$ only. This statement, and hence also inequality~\eqref{mbc:diameter}, is false without the convexity assumption: indeed, by~\cite{CdV} in dimension $n\geqslant 3$ one can construct Euclidean domains with arbitrary high multiplicities of Neumann eigenvalues.  

We end this section with a discussion of the multiplicity bounds for the Dirichlet eigenvalue problem. The following statement is a consequence of Theorems~\ref{neumann:gromov} and~\ref{ueb:dp}.
\begin{cor}
\label{mbd}
Let $(M,g)$ be a complete Riemannian manifold whose Ricci curvature is bounded below $\Ric\geqslant -(n-1)\kappa$, where $\kappa\geqslant 0$, and $\Omega\subset M$ be a geodesically convex precompact domain with smooth boundary. Then there exist constants $C_{18}$ and $C_{19}$ depending on the dimension only such that the multiplicities $m_k(g)$ of the Dirichlet eigenvalues  $\nu_k(g)$ satisfy the following inequalities:
\begin{equation}
\label{mbd:diameter}
m_k(g)\leqslant C_{18}^{1+d\sqrt{\kappa}}((d\sqrt{\kappa})^n+(d/\rad)^n+k^n)
\end{equation}
and 
\begin{equation}
\label{mbd:volume}
m_k(g)\leqslant C_{19}(k+1+\vol(\Omega)(\kappa^{n/2}+\inj^{-n}+\rad^{-n})),
\end{equation}
where $\rad=\rad(\Omega)$ is the maximal radius of an interior rolling ball,  and $d=d(\Omega)$ is the diameter of $\Omega$.
\end{cor}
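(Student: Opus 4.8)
The plan is to deduce both estimates by feeding the eigenvalue upper bounds of Theorem~\ref{ueb:dp} into the counting-function bounds of Theorem~\ref{neumann:gromov}, exactly in the spirit in which Corollaries~\ref{mbc} and~\ref{mbn} were obtained from the corresponding eigenvalue and counting-function estimates. The elementary input is the inequality $m_k(g)\leqslant N_g(\nu_k+0)$ for the Dirichlet counting function $N_g$ of $\Omega$, together with the variational principle: the $j$-th Dirichlet eigenvalue dominates the $j$-th Neumann eigenvalue (the Dirichlet form domain $H^1_0$ sits inside $H^1$), so the Dirichlet counting function is pointwise at most the Neumann one. Since $\Omega$ is geodesically convex with smooth boundary, Theorem~\ref{neumann:gromov} applies and shows that the Neumann counting function of $\Omega$, and hence also $N_g$, satisfies both \eqref{leb:gro1} and \eqref{neumann:volume}; here one uses that for a geodesically convex domain the extrinsic and the intrinsic distances coincide, so that the diameter $d=d(\Omega)$ appearing in \eqref{leb:gro1} and the intrinsic diameter $\bar d(\Omega)$ in \eqref{cheng:dp} and \eqref{buser:dp} are the same quantity.

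For \eqref{mbd:diameter} I would evaluate \eqref{leb:gro1} at $\lambda=\nu_k+0$, which gives $m_k(g)\leqslant\max\{C_4^{1+d\sqrt{\kappa}}d^n\nu_k^{n/2},1\}$, and then substitute the Cheng-type bound \eqref{cheng:dp}. The elementary inequality $(a+b)^{n/2}\leqslant 2^{n/2}(a^{n/2}+b^{n/2})$ together with $(k+1)^n\leqslant 2^n k^n$ for $k\geqslant 1$ turns this into $m_k(g)\leqslant C\, C_4^{1+d\sqrt{\kappa}}\big((d\sqrt{\kappa})^n+(d/\rad)^n+k^n\big)$ with $C$ dimensional, and absorbing $C$ into the base of the exponential factor (legitimate once one assumes $C_4\geqslant 1$) yields \eqref{mbd:diameter}; the case in which the maximum in \eqref{leb:gro1} equals $1$ is immediate. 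Inequality \eqref{mbd:volume} follows the same way with \eqref{neumann:volume} and \eqref{buser:dp} replacing \eqref{leb:gro1} and \eqref{cheng:dp}: evaluating \eqref{neumann:volume} at $\lambda=\nu_k+0$ and inserting \eqref{buser:dp}, the inequality $(a+b)^{n/2}\leqslant 2^{n/2}(a^{n/2}+b^{n/2})$ bounds $\vol_g(\Omega)\,\nu_k^{n/2}$ by a dimensional multiple of $\vol_g(\Omega)(\kappa^{n/2}+\rad^{-n})+(k+1)$, and collecting the resulting terms produces \eqref{mbd:volume}.

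I do not expect a genuine obstacle, since all the analytic work is already done in Theorems~\ref{neumann:gromov} and~\ref{ueb:dp}; what remains is careful bookkeeping. The points that need attention are: first, making the variational comparison of the Dirichlet and Neumann spectra precise at the level of counting functions, keeping track of the indexing conventions (the Dirichlet eigenvalues are indexed from $k=0$, which is what produces the $k+1$ and the additive constant $1$ on the right-hand sides); second, the arithmetic of the dimensional constants, in particular preserving the exponent $1+d\sqrt{\kappa}$ in \eqref{mbd:diameter} while absorbing multiplicative factors into its base; and third, verifying the identification of the extrinsic and intrinsic diameters for geodesically convex domains, which is what allows the diameter in the counting-function bound \eqref{leb:gro1} and in Cheng's bound \eqref{cheng:dp} to be treated as a single quantity.
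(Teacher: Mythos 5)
Your proposal is correct and matches the paper's approach: the paper in fact gives no detailed proof of Corollary~\ref{mbd}, saying only that it ``is a consequence of Theorems~\ref{neumann:gromov} and~\ref{ueb:dp},'' and the chain of substitutions you describe --- domain monotonicity of the counting function ($H^1_0\subset H^1$, hence Dirichlet counting function $\leqslant$ Neumann counting function), evaluating \eqref{leb:gro1} and \eqref{neumann:volume} at $\lambda=\nu_k+0$, inserting the Cheng--Buser type bounds \eqref{cheng:dp} and \eqref{buser:dp}, and cleaning up with the convexity inequalities $(a+b+c)^{n/2}\leqslant 3^{n/2}(a^{n/2}+b^{n/2}+c^{n/2})$ --- is exactly the intended argument. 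Two small remarks worth keeping in mind when writing it up. First, your step $(k+1)^n\leqslant 2^n k^n$ is valid only for $k\geqslant 1$; for $k=0$ one has $m_0=1$ and the right-hand side of \eqref{mbd:diameter} is at least $C_{18}(d/\rad)^n\geqslant C_{18}2^n\geqslant 1$ because any inscribed ball of radius $\rad$ forces $d\geqslant 2\rad$, so the case is trivial but should be noted. Second, the identification of the diameter in \eqref{leb:gro1} with the intrinsic diameter $\bar d$ in \eqref{cheng:dp}--\eqref{buser:dp} is not strictly needed: the paper observes directly after Theorem~\ref{ueb:dp} that since $d\leqslant\bar d$, estimate \eqref{cheng:dp} holds with $d$ in place of $\bar d$, which is the only direction the argument uses.
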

Note that all multiplicity bounds in the corollaries above are in fact bounds for the sums $\sum_{i\leqslant k} m_i(g)$, and in particular, may not reflect the actual behaviour of the individual multiplicities. It is plausible that in particular instances the multiplicities satisfy better bounds. For example, considering inequality~\eqref{mbd:diameter} for Euclidean domains, one can ask whether the remaining dependence on geometry is actually necessary. 
\begin{open}
\label{open:1}
Does there exist a constant $C(n,k)$ depending on the dimension $n\geqslant 3$ and the index $k \geqslant 1$, such that the multiplicity of the $k$-th Dirichlet eigenvalue of a Euclidean domain $\Omega\subset\mathbb R^n$ is bounded above by $C(n,k)$?
\end{open}
Clearly, $C(n,0)=1$ for all $n$,  and by the results of~\cite{Na87}, see also~\cite{HMN, KKP, Berd},  one can take $C(2,k)=2k+1$ for $k\geqslant 1$. To our knowledge, the question above is open even for convex domains, where we have a positive answer for the Neumann problem, see the first statement in Corollary~\ref{mbn}. If  instead of Euclidean domains we consider arbitrary Riemannian manifolds with boundary, then the answer to  Open Problem \ref{open:1} is negative. 
Indeed, by~\cite{CC, CdV} for any integers $n\geqslant 2$, $k\geqslant 1$,  and  $N>0$  there exists a closed manifold $M$ of dimension $n$, such that  $m_{k}(M)>N$.  Then, for a sufficiently small $\epsilon>0$  the multiplicity of the $k$-th Dirichlet eigenvalue of the cylinder $M \times [-\epsilon, \epsilon]$, equipped with the product metric, also satisfies $m_{k}(M)>N$.

Note also that the methods used in~\cite{CC, CdV} to construct closed surfaces with Laplace eigenvalues of high multiplicity can be generalized directly to surfaces with Neumann boundary conditions.  However, the approach does not extend in a straightforward way to the case of the Dirichlet boundary conditions. It would be interesting to know whether for any $k\geqslant 1$ there exists a surface with boundary whose $k$th Dirichlet eigenvalue has  an arbitrary large multiplicity.

In higher dimensions the Dirichlet eigenvalues also behave differently: they satisfy the so-called universal inequalities, and hence, there is no analogue of the eigenvalue prescription results~\cite{CdV} for this problem. Nevertheless, it is still possible that the multiplicities can be prescribed; we state this question in the form of the following problem.
\begin{open}
Let $M$ be a manifold with boundary of dimension $n \geqslant 3$. For given integers $k\geqslant 1$ and $N\geqslant 1$ does there exist a Riemannian metric on $M$ such that the multiplicity of the $k$-th Dirichlet eigenvalue is equal to $N$?
\end{open}

We conclude with a few remarks on multiplicity bounds similar to inequality~\eqref{mbd:volume}. Recall that for a convex Euclidean domain $\Omega\subset\mathbb R^n$ it takes the form
$$
m_k(g)\leqslant C_{19}(k+1+\vol(\Omega)/\rad^n).
$$
For arbitrary precompact Euclidean domains one can also bound the multiplicity in terms of volume and inradius; the latter quantity is defined as the maximal radius of an inscribed ball
$$
\rho(\Omega)=\sup\{r: B(x,r)\subset \Omega~\text{for some $x\in\Omega$}\}.
$$
In more detail, by the result of Li and Yau~\cite{LY83} the Dirichlet counting function of an arbitrary domain $\Omega\subset\mathbb R^n$ satisfies the inequality $N(\lambda)\leqslant C_{20}\vol(\Omega)\lambda^{n/2}$, where $C_{20}$ is a constant depending only on the dimension. Combining this inequality with the upper bound due to Cheng and Yang~\cite[Proposition 3.1]{CY07}:
$$
\nu_k(\Omega)\leqslant \frac{n+3}{n}\nu_0(\Omega)(k+1)^{2/n}\qquad\text{for any }k\geqslant n,
$$
we obtain
\begin{multline*}
m_k(g)\leqslant C_{21}\vol(\Omega)\nu_0(\Omega)^{n/2}(k+1)\leqslant C_{21}\vol(\Omega)\nu_0(B({\rho}))^{n/2}(k+1)\\ \leqslant C_{22}\left(\vol(\Omega)/\rho(\Omega)^n\right)(k+1),
\end{multline*}
where $B({\rho})$ is an inscribed ball of radius $\rho=\rho(\Omega)$, and in the second inequality we used the domain monotonicity. Note that a similar multiplicity bound for Neumann eigenvalues does not hold if $n\geqslant 3 $, as one can  prescribe any finite part of the Neumann spectrum while keeping the volume and the inradius of a domain bounded. The last statement can be deduced  by inspecting the arguments in~\cite[pp. 610-611]{CdV}.


%
%
\section{Poincar\'e inequality and coverings by metric balls}
\label{prem}
\subsection{{Poincar\'e inequality}}
A key ingredient in our approach to the lower eigenvalue bounds by Gromov and Buser is the following version of the Neumann-Poincar\'e inequality.
\begin{prop}
\label{np:intro}
Let $(M,g)$ be a complete Riemannian manifold whose Ricci curvature is bounded below, $\Ric\geqslant -(n-1)\kappa$, where $\kappa\geqslant 0$ and $n$ is the dimension of $M$. Then, for any $p\geqslant 1$, there exists a constant $C_N=C_N(n,p)$ that depends on the dimension $n$ and $p$ only,  such that for any smooth function $u$ on $M$ the following inequality holds:
$$
\int_{B_R}\abs{u-u_R}^pd\vol\leqslant C_NR^p{e^{(n-1)R\sqrt{\kappa}}}\int_{B_{2R}}\abs{\nabla u}^pd\vol,
$$
where $B_R$ and $B_{2R}$ are concentric metric balls in $M$ of radii $R$ and $2R$ respectively, and $u_R$ is the mean-value of $u$ on $B_R$, i.e. $u_R=\vol(B_R)^{-1}\int_{B_R}u\,d\!\vol$.
\end{prop}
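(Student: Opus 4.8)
The plan is to derive the inequality from a geodesic integral estimate in the spirit of Buser and of the Cheeger--Colding segment inequality, the exponential factor entering only through the Bishop--Gromov volume comparison. Since $u_R$ is the mean of $u$ over $B_R$, Jensen's inequality gives
\[
\int_{B_R}\abs{u-u_R}^p\,d\vol\le\frac{1}{\vol(B_R)}\int_{B_R}\int_{B_R}\abs{u(x)-u(y)}^p\,d\vol(x)\,d\vol(y).
\]
By completeness (Hopf--Rinow) any $x,y\in B_R$ are joined by a minimising unit-speed geodesic $\gamma_{xy}$, and since $d(x,y)<2R$ the fundamental theorem of calculus together with H\"older's inequality yields $\abs{u(x)-u(y)}^p\le(2R)^{p-1}\int_0^{d(x,y)}\abs{\nabla u(\gamma_{xy}(s))}^p\,ds$. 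Splitting this at the midpoint $s=d(x,y)/2$ and exchanging $x$ and $y$ (reversing the geodesic swaps the two halves), matters reduce to proving
\[
\int_{B_R}\int_{B_R}\int_0^{d(x,y)/2}\abs{\nabla u(\gamma_{xy}(s))}^p\,ds\,d\vol(x)\,d\vol(y)\le C(n)\,R\,e^{(n-1)R\sqrt{\kappa}}\,\vol(B_R)\int_{B_{2R}}\abs{\nabla u}^p\,d\vol ,
\]
since a point $\gamma_{xy}(s)$ with $s\le d(x,y)/2$ lies within distance $R$ of $x$, hence inside $B_{2R}$, which is why only this larger ball occurs on the right.

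To bound the triple integral I would fix $x$, pass to geodesic polar coordinates $(r,\theta)$ at $x$ (discarding the null set inside the cut locus), and write $y=\exp_x(r\theta)$ with $r=d(x,y)$, $d\vol(y)=J_x(r,\theta)\,dr\,d\theta$, and $\gamma_{xy}(s)=\exp_x(s\theta)$. Interchanging the $r$- and $s$-integrations recasts the inner integral as an integral of $\abs{\nabla u}^p$ against the kernel obtained by integrating $J_x(\cdot,\theta)$ over $[2s,\rho_x(\theta)]$ and dividing by $J_x(s,\theta)$. By the Bishop--Gromov comparison under $\Ric\ge-(n-1)\kappa$, the ratio of $J_x(r,\theta)$ to the corresponding Jacobian in the space form of curvature $-\kappa$ is non-increasing in $r$; since that model Jacobian at radius $r$ is at least $r^{n-1}$, this bounds the above kernel by a constant multiple of $d(x,z)^{-(n-1)}$ on $B(x,R)$, once $J_x(s,\theta)\,ds\,d\theta$ is rewritten as $d\vol(z)$ with $z=\exp_x(s\theta)$ — in other words by a Riemannian Riesz potential of order one. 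Carrying out the remaining integration in $x$ by Fubini and estimating $\int_{B(z,R)}d(x,z)^{-(n-1)}\,d\vol(x)$, again by volume comparison, completes the triple-integral bound; the exponential factor is precisely the doubling ratio $\vol(B_{2R})/\vol(B_R)$, which Bishop--Gromov controls by $C(n)\,e^{(n-1)R\sqrt{\kappa}}$. Together with Jensen's inequality and the power $(2R)^{p-1}$ this gives the Proposition, with $C_N$ depending only on $n$ and $p$.

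The delicate point is the part of each geodesic near its starting point, where $J_x(s,\theta)$ degenerates like $s^{n-1}$ and the kernel $d(x,z)^{-(n-1)}$ is singular, though still locally integrable; it must be handled so that the resulting constant is genuinely independent of the geometry of $M$ beyond $n$, $p$, $R$ and $\sqrt{\kappa}R$. The way to achieve this is to retain the true Riemannian volume densities throughout rather than replacing them by those of the model space of constant curvature $-\kappa$: then the factor $\vol(B_R)$ coming from Jensen's inequality is absorbed by the matching factor produced by the volume-comparison bound on the Riesz kernel, leaving only the dimensional constant, the power $R^p$ and the claimed exponential. Alternatively one may simply invoke the segment inequality of Cheeger and Colding, of which the computation above is the specialisation relevant here; and treating the regimes $\sqrt{\kappa}R\le 1$ and $\sqrt{\kappa}R>1$ separately keeps the bookkeeping of the constant transparent.
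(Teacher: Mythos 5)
Your proposal is correct and follows the same route as the paper: the paper treats Proposition~\ref{np:intro} as folkloric but proves its generalization to convex domains (Proposition~\ref{np4np}) by precisely the chain you describe---Jensen's inequality, then the fundamental theorem of calculus and H\"older along minimising geodesics, then the Cheeger--Colding segment inequality applied with $A=B=B_R$, $W=B_{2R}$, together with the bound $C(n,\kappa,R)\leqslant 2^nRe^{(n-1)R\sqrt{\kappa}}$. Your polar-coordinate and Riesz-kernel digression (midpoint splitting, interchange of the $r$- and $s$-integrations, Bishop--Gromov comparison of Jacobians) is an unpacked proof of the segment inequality that the paper invokes as a black box, so the two arguments coincide in substance.
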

The statement above is folkloric; related results can be found in~\cite[section 5]{Bu82} and~\cite{SC02}. We  extend the above inequality to the case of convex domains in Riemannian manifolds.
\begin{prop}
\label{np4np}
Let $(M,g)$ be a complete Riemannian manifold whose Ricci curvature is bounded below, $\Ric\geqslant -(n-1)\kappa$, where $\kappa\geqslant 0$ and $n$ is the dimension of $M$. Then for any $p\geqslant 1$ there exists a constant $C_N=C_N(n,p)$ that depends on the dimension $n$ and $p$ only such that for any geodesically convex domain $\Omega\subset M$ and for any smooth function $u$ on $\Omega$ the following inequality holds
\begin{equation}\label{poindomain}
\int_{B_R\cap\Omega}\abs{u-u_R}^pd\vol\leqslant C_NR^p{e^{(n-1)R\sqrt{\kappa}}}\int_{B_{2R}\cap\Omega}\abs{\nabla u}^pd\vol,
\end{equation}
where $B_R$ and $B_{2R}$ are concentric metric balls in $M$ of radii $R$ and $2R$ respectively, and $u_R$ is the mean-value of $u$ on $B_R\cap\Omega$, i.e. $u_R=\vol(B_R\cap\Omega)^{-1}\int_{B_R\cap\Omega}ud\vol$.
\end{prop}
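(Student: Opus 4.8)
\emph{Plan of proof.} The plan is to run the proof of the ``ball'' version, Proposition~\ref{np:intro}, with every ball replaced by its intersection with $\Omega$. Geodesic convexity of $\Omega$ is exactly the hypothesis that makes this substitution legitimate, whereas the curvature enters only through the Bishop--Gromov volume comparison in the ambient manifold $M$, which is unaffected by intersecting with $\Omega$; hence the same constant $C_N=C_N(n,p)$ and the same factor $R^{p}e^{(n-1)R\sqrt\kappa}$ will reappear. Write $D=B_R\cap\Omega$ and assume $D\neq\varnothing$, so that $D$ is a nonempty open set of positive volume. Since $u_R$ is the $\vol$-mean of $u$ over $D$, Jensen's inequality applied to the convex function $t\mapsto\abs{t}^{p}$ gives $\int_{D}\abs{u-u_R}^{p}\,d\vol\leqslant\vol(D)^{-1}\int_{D\times D}\abs{u(y)-u(z)}^{p}\,d\vol(y)\,d\vol(z)$, so that~\eqref{poindomain} will follow once one shows
\[
\int_{D\times D}\abs{u(y)-u(z)}^{p}\,d\vol(y)\,d\vol(z)\leqslant C_N\,R^{p}e^{(n-1)R\sqrt\kappa}\,\vol(D)\int_{B_{2R}\cap\Omega}\abs{\nabla u}^{p}\,d\vol .
\]

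The proof of Proposition~\ref{np:intro} estimates the analogous double integral (with $D$, $B_{2R}\cap\Omega$ replaced by $B_R$, $B_{2R}$) by writing $\abs{u(y)-u(z)}$ as the integral of $\abs{\nabla u}$ along a minimising geodesic from $y$ to $z$, applying H\"older's inequality, and then controlling the resulting expression by the Bishop--Gromov Jacobian bounds --- equivalently, by a segment-type inequality or a chain of metric balls. The only property of the region that is used, besides volume comparison in $M$, is that any two of its points are joined by a minimising geodesic of $M$ contained in the concentric ball of twice the radius. For $y,z\in D$ this is supplied by geodesic convexity, which yields a minimising geodesic $\gamma_{yz}\subset\Omega$ of length $\dist(y,z)\leqslant 2R$; moreover, if $x$ denotes the common centre of $B_R$ and $B_{2R}$, then for every $t$
\[
\dist\bigl(x,\gamma_{yz}(t)\bigr)\leqslant\tfrac{1}{2}\bigl(\dist(x,y)+\dist(x,z)+\dist(y,z)\bigr)<2R ,
\]
so $\gamma_{yz}\subset B_{2R}\cap\Omega$. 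Performing the argument of Proposition~\ref{np:intro} verbatim, with $B_R$, $B_{2R}$ replaced by $D$, $B_{2R}\cap\Omega$ and all integrations restricted to $\Omega$, one then arrives at the displayed inequality with the same constant $C_N$.

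I expect the main point requiring care to be exactly the claim that the proof of Proposition~\ref{np:intro} localises to $\Omega$ line by line: one must check that the geodesic and co-area bookkeeping never evaluates $\abs{\nabla u}$ at a point outside $B_{2R}\cap\Omega$, that the auxiliary volume estimate it relies on (a relative-volume or doubling property for sets of the form $B(q,\rho)\cap\Omega$ with $q\in\Omega$) does hold for geodesically convex $\Omega$ at the relevant scales, and that the measurable selection of $\gamma_{yz}$ causes no difficulty when the minimising geodesic is not unique. Two routine matters remain: as $\Omega$ is merely a domain, possibly with irregular boundary, one first proves the inequality for $u\in C^{1}(\overline\Omega)$ and then passes to the general case by density; and one tracks the numerical constants in the volume comparison so that the whole dependence on $\kappa$ and $R$ is absorbed into the factor $R^{p}e^{(n-1)R\sqrt\kappa}$, exactly as in Proposition~\ref{np:intro}, leaving a constant that depends on $n$ and $p$ only.
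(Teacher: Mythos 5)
Your proposal is correct and follows essentially the same route as the paper: Jensen to reduce to the double integral over $(B_R\cap\Omega)\times(B_R\cap\Omega)$, representation of $|u(y)-u(z)|$ as the line integral of $|\nabla u|$ along a minimising geodesic, H\"older to pull out $(2R)^{p-1}$, and then geodesic convexity to ensure the geodesics stay inside $B_{2R}\cap\Omega$. The one place where the paper is more specific than you are is the "Bishop--Gromov / segment-type / chaining" step: the paper commits to the Cheeger--Colding segment inequality (Proposition~\ref{segmentinq}), taking $A=B=B_R\cap\Omega$ and $W=B_{2R}\cap\Omega$. This choice is precisely what makes your flagged concern evaporate: the segment inequality is stated for arbitrary open subsets $A,B$ of a metric ball, its constant $C(n,\kappa,R)$ is intrinsic to the ambient space form and independent of $\Omega$, and the factor $\vol(A)+\vol(B)=2\vol(B_R\cap\Omega)$ it produces cancels against the $\vol(B_R\cap\Omega)^{-1}$ coming from Jensen — so no doubling property for sets of the form $B(q,\rho)\cap\Omega$ is ever needed (that is required elsewhere in the paper, in Lemma~\ref{GB}, but not here). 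The measurable-selection issue you raise is handled in the paper exactly as you expect, by discarding the measure-zero set of pairs with non-unique minimisers.
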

The inequality in Proposition \ref{np4np} (with a slightly different constant in the exponent) can be obtained by building on the argument used in~\cite[Chap.~5]{SC02}. Below we give a shorter proof, avoiding technicalities by using the so-called segment inequality due to Cheeger and Colding~\cite{CC96}. Before stating it we introduce the following notation: we set
$$
C(n,\kappa, R)=2R\sup_{0<s/2\leqslant t\leqslant s}\frac{\vol(\partial B_\kappa(s))}{\vol(\partial B_\kappa(t))},
$$
where $R>0$ and $\partial B_\kappa(r)$ is a sphere of radius $r$ in an $n$-dimensional simply connected space of constant sectional curvature $-\kappa$. Note that for $\kappa\geqslant 0$ the ratio of volumes above is not greater than $(s/t)^{n-1}e^{(n-1)s\sqrt{\kappa}}$, and we obtain
\begin{equation}
\label{rem}
C(n,\kappa,R)\leqslant 2^nRe^{(n-1)R\sqrt{k}}
\end{equation}
The following proposition is a reformulation of~~\cite[Theorem 2.11]{CC96}.
\begin{prop}[The segment inequality]
\label{segmentinq}
Let $(M,g)$ be a complete Riemannian manifold whose Ricci curvature is bounded below, $\Ric\geqslant -(n-1)\kappa$, where $\kappa\geqslant 0$ and $n$ is the dimension of $M$. Let $B_R$ be a metric ball, $A$ and $B$ be open subsets in $B_R$, and $W\subset M$ be an open subset that contains the convex hull of the union $A\cup B$. Then for any nonnegative integrable function $F$ on $W$ the following inequality holds:
\begin{equation}
\label{cheeger-colding}
\int_{A\times B}\int_0^{d(x,y)}F(\gamma_{x,y}(s))ds\, dx \,dy\leqslant C(n,\kappa,R) (\vol(A)+\vol(B))\int_{W} F(z) dz,
\end{equation}
where $\gamma_{x,y}:[0,d(x,y)]\to M$ is a shortest  geodesic joining $x$ and $y$, and the first integral on the left hand-side is taken over the subset of $A\times B$ formed by the pairs $(x,y)$ of points that can be joined by such a unique geodesic.
\end{prop}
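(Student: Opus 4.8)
The plan is to reproduce the proof of Cheeger and Colding, whose two ingredients are a dyadic symmetrization that reduces the two-sided inequality to a one-sided one, and an evaluation of the one-sided estimate in geodesic polar coordinates controlled by the pointwise Bishop comparison. Outside a set of measure zero (the cut locus of either endpoint) the points $x,y\in B_R$ are joined by a unique minimizing geodesic $\gamma_{x,y}\colon[0,d(x,y)]\to M$ with $\gamma_{x,y}(0)=x$ and $\gamma_{x,y}(d(x,y))=y$, so we may restrict attention to such pairs. Writing $d=d(x,y)$, I would split the inner integral at the midpoint,
\[
\int_0^{d}F(\gamma_{x,y}(s))\,ds=\mathcal F_1(x,y)+\mathcal F_2(x,y),\qquad
\mathcal F_1(x,y):=\int_0^{d/2}F(\gamma_{x,y}(s))\,ds,
\]
and observe that reversing a minimizing geodesic, $\gamma_{y,x}(s)=\gamma_{x,y}(d-s)$, gives $\mathcal F_2(x,y)=\mathcal F_1(y,x)$. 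Hence the left-hand side of~\eqref{cheeger-colding} equals $\int_{A\times B}\mathcal F_1(x,y)\,dx\,dy+\int_{B\times A}\mathcal F_1(x,y)\,dx\,dy$, and it suffices to prove the one-sided bound
\[
\int_{U\times V}\mathcal F_1(x,y)\,dx\,dy\le C(n,\kappa,R)\,\vol(V)\int_W F\,d\vol
\]
for open sets $U,V\subset B_R$ with $\conv(U\cup V)\subset W$, applied once with $(U,V)=(A,B)$ and once with $(U,V)=(B,A)$.

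For the one-sided bound I would fix $y\in V$ and pass to geodesic polar coordinates centered at $y$: for $x=\exp_y(\rho\theta)$ with $\rho=d(x,y)$ and $\theta$ in the unit sphere $S_yM$ of $T_yM$ one has $d\vol(x)=\mathcal A_y(\rho,\theta)\,d\rho\,d\theta$, where $\mathcal A_y$ is the area density (set equal to $0$ beyond the cut value), and $\gamma_{x,y}(s)=\exp_y((d-s)\theta)$, so the substitution $\tau=d-s$ gives $\mathcal F_1(x,y)=\int_{\rho/2}^{\rho}F(\exp_y(\tau\theta))\,d\tau$. Integrating over $x\in U$, and interchanging the order of integration (all integrands being nonnegative) using $\{\rho:\rho/2\le\tau\le\rho\}=[\tau,2\tau]$, turns $\int_U\mathcal F_1(x,y)\,dx$ into
\[
\int_{S_yM}\int_0^\infty F(\exp_y(\tau\theta))\left(\int_{\tau}^{2\tau}\mathbf 1[\exp_y(\rho\theta)\in U]\,\mathcal A_y(\rho,\theta)\,d\rho\right)d\tau\,d\theta .
\]
The point of this rearrangement is that the inner radial integral is now confined to the interval $[\tau,2\tau]$, whose ratio of endpoints is exactly $2$.

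The decisive estimate, and the only place the curvature hypothesis enters, is the bound on that inner integral. Since $x\in U\subset B_R$ and $y\in V\subset B_R$, we have $\rho=d(x,y)<2R$, so the integration is in fact over $[\tau,\min(2\tau,2R)]$, an interval of length at most $2R$. By the pointwise Bishop comparison, $\rho\mapsto\mathcal A_y(\rho,\theta)/\mathcal A_\kappa(\rho)$ is non-increasing, where $\mathcal A_\kappa$ is the area density of the simply connected model space of constant curvature $-\kappa$, so $\mathcal A_y(\rho,\theta)\le(\mathcal A_\kappa(\rho)/\mathcal A_\kappa(\tau))\,\mathcal A_y(\tau,\theta)$ for $\rho\ge\tau$; since $\rho/2\le\tau\le\rho$ throughout and $\mathcal A_\kappa(\rho)/\mathcal A_\kappa(\tau)=\vol(\partial B_\kappa(\rho))/\vol(\partial B_\kappa(\tau))$, the inner integral is at most $C(n,\kappa,R)\,\mathcal A_y(\tau,\theta)$ by the definition of $C(n,\kappa,R)$ (only values $\rho\le 2R$ occur). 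Moreover, whenever the inner integral is nonzero there is a point $\exp_y(\rho\theta)\in U$ with $\rho\ge\tau$, whence $\exp_y(\tau\theta)$ lies on the minimizing geodesic from $y\in V$ to that point, hence in $\conv(U\cup V)\subset W$; therefore
\[
\int_U\mathcal F_1(x,y)\,dx\le C(n,\kappa,R)\int_{S_yM}\int_0^\infty F(\exp_y(\tau\theta))\,\mathbf 1[\exp_y(\tau\theta)\in W]\,\mathcal A_y(\tau,\theta)\,d\tau\,d\theta\le C(n,\kappa,R)\int_W F\,d\vol,
\]
by the polar-coordinate formula, and integrating in $y$ over $V$ yields the one-sided bound; adding the two instances gives~\eqref{cheeger-colding}. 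I expect the main difficulty to be bookkeeping rather than conceptual: handling the cut locus so that the polar-coordinate formula, the change of variables $\tau=d-s$, and Tonelli's theorem are all legitimate almost everywhere, and arranging the midpoint split so that the surviving radial interval has ratio precisely $2$, which is exactly what the constant $C(n,\kappa,R)$ is designed to absorb.
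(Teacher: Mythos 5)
Your proof is correct and is essentially the argument the paper relies on: Proposition~\ref{segmentinq} is not proved in the paper but quoted from \cite[Theorem~2.11]{CC96}, and your midpoint symmetrization, passage to polar coordinates centred at the fixed endpoint, and pointwise Bishop comparison on the interval $[\tau,2\tau]$ reproduce exactly the Cheeger--Colding proof. The only point worth recording is that your derivation yields the constant $2R\sup\{\vol(\partial B_\kappa(s))/\vol(\partial B_\kappa(t)):\ 0<s/2\leqslant t\leqslant s\leqslant 2R\}$, i.e.\ the supremum in the paper's definition of $C(n,\kappa,R)$ must be read with the implicit restriction $s\leqslant 2R$ (without it the supremum is infinite when $\kappa>0$), which matches your observation that only radii $\rho\leqslant 2R$ occur.
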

\begin{proof}[Proof of Proposition \ref{np4np}]
For arbitrary open subsets $A$ and $B$ consider the set of pairs $(x,y)\in A\times B$ such that the points $x$ and $y$ can be joined by a unique shortest geodesic $\gamma_{x,y}$. By standard results in Riemannian geometry, see~\cite{Cha}, its complement in $A\times B$ has zero measure, and abusing the notation, we also denote it below by $A\times B$.

It is not hard to see that for any $x\in M$ the inequality
$$
\abs{u-u_R}^p(x)\leqslant\vol(B_R\cap\Omega)^{-1}\int_{B_R\cap\Omega}\abs{u(x)-u(y)}^pdy.
$$
holds, where  $u_R=\vol(B_R\cap\Omega)^{-1}\int_{B_R\cap\Omega}u$. Indeed, for $p=1$ it is straightforward, and for $p>1$ it can be obtained from the former case by using the H\"older inequality. Integrating it over $B_R\cap\Omega$, we obtain
\begin{multline*}
\int_{B_R\cap\Omega}\abs{u(x)-u_R}^pdx\leqslant\vol(B_R\cap\Omega)^{-1}\int_{B_R\cap\Omega}\int_{B_R\cap\Omega}\abs{u(x)-u(y)}^pdxdy\\
\leqslant\vol(B_R\cap\Omega)^{-1}\int_{(B_R\cap\Omega)\times (B_R\cap\Omega)}\left(\int_0^{d(x,y)}\abs{\nabla u(\gamma_{x,y}(s))}ds\right)^pdxdy\\
\leqslant (2R)^{p-1}\vol(B_R\cap\Omega)^{-1}\int_{(B_R\cap\Omega)\times(B_R\cap\Omega)}\int_0^{d(x,y)}\abs{\nabla u(\gamma_{x,y}(s))}^pdsdxdy,
\end{multline*}
where in the last inequality we used the H\"older inequality and the relation $\dist(x,y)\leqslant 2R$. Since $\Omega$ is convex, the convex hull of $B_R\cap\Omega$ lies in $B_{2R}\cap\Omega$. Thus, applying Proposition~\ref{segmentinq} with $A=B=B_R\cap\Omega$ and $W=B_{2R}\cap\Omega$, and using inequality~\eqref{rem}, we obtain
$$
\int_{(B_R\cap\Omega)\times (B_R\cap\Omega)}\int_{0}^{d(x,y)}\abs{\nabla u(\gamma_{x,y}(s))}^p ds\,dx\, dy\leqslant 2^{n+1}e^{(n-1)R\sqrt{\kappa}}R\vol(B_R\cap\Omega)\int_{B_{2R}\cap\Omega}\abs{\nabla u(z)}^pdz.
$$
Combining the last two inequalities, we arrive at the Poincare inequality \eqref{poindomain}.
\end{proof}

\subsection{{Coverings by metric balls}: closed manifolds}
We proceed with the estimates for the cardinality and multiplicity of certain coverings. The following lemma is by now a standard application of the Gromov-Bishop volume comparison theorem. We outline its proof for the sake of completeness.
\begin{lemma}
\label{cp:l1}
Let $(M,g)$ be a closed Riemannian manifold whose Ricci curvature satisfies the bound $\Ric\geqslant -(n-1)\kappa$, where $\kappa\geqslant 0$. Let $(B_i)$ be a covering of $M$ by balls $B_i=B(x_i,\rho)$ such that the balls $B(x_i,\rho/2)$ are disjoint. Then:
\begin{itemize}
\item[(i)] for any $0<\rho\leqslant 2d$ the cardinality of the family $(B_i)$ is not greater than $2^ne^{(n-1)d\sqrt{\kappa}}(d/\rho)^n$, where $d$ is the diameter of $M$;
\item[(ii)] for any $\rho>0$ and for any $x\in M$ the number of balls from $(B(x_i,2\rho))$ that contain $x$ is not greater than ${12}^ne^{6(n-1)\rho\sqrt{\kappa}}$.
\end{itemize}
\end{lemma}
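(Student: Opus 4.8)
The plan is to deduce both statements from the Gromov--Bishop volume comparison theorem, together with an elementary estimate for the volume of balls in the comparison space. Write $V_\kappa(r)=\vol(B_\kappa(r))$ for the volume of a ball of radius $r$ in the $n$-dimensional simply connected space form of curvature $-\kappa$, so that $V_\kappa(r)=c_n\int_0^r\bigl(\sinh(\sqrt\kappa\,t)/\sqrt\kappa\bigr)^{n-1}dt$ with $c_n=\vol(S^{n-1})$. Using $t\leqslant\sinh t\leqslant t\cosh t\leqslant t\,e^{t}$ for $t\geqslant 0$, one gets $r^n/n\leqslant V_\kappa(r)/c_n\leqslant e^{(n-1)\sqrt\kappa\,r}\,r^n/n$, and hence the key estimate
\begin{equation*}
\frac{V_\kappa(R)}{V_\kappa(r)}\leqslant\Bigl(\frac{R}{r}\Bigr)^{n}e^{(n-1)\sqrt\kappa\,R}\qquad\text{for all }R,r>0 .
\end{equation*}
Recall that the comparison theorem states that $r\mapsto\vol(B(p,r))/V_\kappa(r)$ is non-increasing for every $p\in M$; in particular $\vol(B(p,R))/\vol(B(p,r))\leqslant V_\kappa(R)/V_\kappa(r)$ whenever $R\geqslant r$.

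To prove (i), note that the balls $B(x_i,\rho/2)$ are pairwise disjoint and contained in $M$, while $B(x_i,d+\varepsilon)=M$ for every $\varepsilon>0$ since $\diam(M)\leqslant d$. Applying the comparison theorem at each centre $x_i$ (legitimate since $\rho\leqslant 2d$ gives $\rho/2\leqslant d<d+\varepsilon$) yields $\vol(B(x_i,\rho/2))\geqslant \bigl(V_\kappa(\rho/2)/V_\kappa(d+\varepsilon)\bigr)\vol(M)$. Summing over $i$ and using disjointness, $\sum_i\vol(B(x_i,\rho/2))\leqslant\vol(M)$, we obtain that the number of balls in the family is at most $V_\kappa(d+\varepsilon)/V_\kappa(\rho/2)$; letting $\varepsilon\to 0$ and applying the key estimate with $R=d$, $r=\rho/2$ gives the bound $V_\kappa(d)/V_\kappa(\rho/2)\leqslant(2d/\rho)^n e^{(n-1)d\sqrt\kappa}$, as claimed.

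To prove (ii), fix $x\in M$ and let $x_{i_1},\dots,x_{i_m}$ be the centres with $x\in B(x_{i_j},2\rho)$, so that $\dist(x,x_{i_j})<2\rho$. The balls $B(x_{i_j},\rho/2)$ are pairwise disjoint and all contained in $B(x,5\rho/2)$, while for each fixed $j_0$ one has $B(x,5\rho/2)\subseteq B(x_{i_{j_0}},9\rho/2)$ by the triangle inequality. Hence the comparison theorem at $x_{i_{j_0}}$ gives $\vol(B(x_{i_{j_0}},\rho/2))\geqslant\bigl(V_\kappa(\rho/2)/V_\kappa(9\rho/2)\bigr)\vol(B(x,5\rho/2))$, and summing these lower bounds against $\sum_j\vol(B(x_{i_j},\rho/2))\leqslant\vol(B(x,5\rho/2))$ forces $m\leqslant V_\kappa(9\rho/2)/V_\kappa(\rho/2)$. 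Enlarging $9\rho/2$ to $6\rho$ and invoking the key estimate with $R=6\rho$, $r=\rho/2$ bounds the right-hand side by $12^n e^{6(n-1)\rho\sqrt\kappa}$.

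I do not anticipate a genuine obstacle: this is the standard packing-and-volume-comparison argument. The only points that need a little care are establishing the clean model-space ratio bound (the hyperbolic-sine inequalities above), the harmless limit $\varepsilon\to0$ in (i), which sidesteps any question about the measure of the geodesic sphere of radius exactly $d$, and the bookkeeping of constants so that they emerge in precisely the stated form --- this is what forces the mildly wasteful choices $R=d$, $r=\rho/2$ in part (i) and the enlargement of $9\rho/2$ to $6\rho$ in part (ii).
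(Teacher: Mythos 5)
Your argument is correct and is essentially the same as the paper's: both establish the comparison-space ratio bound $V_\kappa(R)/V_\kappa(r)\leqslant (R/r)^n e^{(n-1)R\sqrt{\kappa}}$ and then run the standard packing argument with disjoint balls $B(x_i,\rho/2)$. The only cosmetic differences are that the paper singles out the centre $x_{i_0}$ of minimal small-ball volume and applies the comparison there (implicitly treating the measure-zero sphere of radius $d$), whereas you apply the comparison at every centre and handle the radius-$d$ ball via the harmless $\varepsilon\to 0$ limit; both routes give the stated constants.
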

\begin{proof}
First, by the relative volume comparison theorem, see~\cite{Cha}, it is straightforward to show that the volumes of concentric metric balls of radii $0<r\leqslant R$ satisfy the relation
\begin{equation}
\label{vc}
\vol(B_R)\leqslant e^{(n-1)R\sqrt{\kappa}}(R/r)^n\vol(B_r).
\end{equation}
Now to prove~$(i)$ note that $m=\card(B_i)$ satisfies the following relations
$$
m\cdot\inf_i\vol(B(x_i,\rho/2))\leqslant\sum_i\vol(B(x_i,\rho/2))\leqslant\vol(M).
$$
Let $x_{i_0}$ be a point at which the infimum in the left hand-side above is achieved. Then, for any $0<\rho\leqslant 2d$ we obtain
$$
m\leqslant\vol(B(x_{i_0},d))/\vol(B(x_{i_0},\rho/2))\leqslant 2^ne^{(n-1)d\sqrt{\kappa}}(d/\rho)^n,
$$
where in the last inequality we used~\eqref{vc}. 

To prove the statement~$(ii)$ we re-denote by $x_{i_0}$ the point at which the infimum $\inf\vol(B(x_i,\rho/2))$ is achieved while $i$ ranges over all indices such that the balls $B(x_i,2\rho)$ contain $x$. Note that if $x\in B(x_i,2\rho)$, then $B(x_i,2\rho)\subset B(x_{i_0},6\rho)$. Thus, for any $\rho>0$ we obtain that
$$
\mult_x(B_i)\leqslant\vol(B(x_{i_0},6\rho))/\vol(B(x_{i_0},\rho/2))\leqslant {12}^ne^{6(n-1)\rho\sqrt{\kappa}},
$$
where in the last inequality we again used~\eqref{vc}.
\end{proof}
For a proof of the Buser inequality in Theorem~\ref{gromov} we  also need the following supplement to Lemma~\ref{cp:l1}.
\begin{lemma}
\label{cp:l2}
Under the hypotheses of Lemma~\ref{cp:l1}, the cardinality of the family $(B(x_i,\rho))$ 
is not greater than $c_1\vol(M)(\min\{\rho,\inj\})^{-n}$, where $\inj$ is the injectivity radius of $M$, and  $c_1$ is a constant that depends on the dimension $n$ only.
\end{lemma}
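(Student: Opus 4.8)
\emph{Proof proposal.} The plan is to run the same volume–counting argument as in the proof of Lemma~\ref{cp:l1}(i), but to replace the crude volume lower bound coming from relative volume comparison (which would carry the unwanted exponential factor $e^{(n-1)d\sqrt{\kappa}}$) by a purely dimensional lower bound that is valid for balls whose radius stays below the injectivity radius. Set $r=\tfrac12\min\{\rho,\inj\}$. Since $r\leqslant\rho/2$, the balls $B(x_i,r)$ are contained in the pairwise disjoint balls $B(x_i,\rho/2)$, hence are themselves pairwise disjoint, and therefore
$$
\card(B_i)\cdot\inf_i\vol(B(x_i,r))\leqslant\sum_i\vol(B(x_i,r))\leqslant\vol(M).
$$

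The key input I would invoke is the lower volume bound for small balls: there is a constant $c_n>0$ depending on the dimension only such that $\vol(B(x,s))\geqslant c_n s^n$ whenever $s\leqslant\tfrac12\inj(x)$. This is a classical result of Croke, proved by an integral–geometric (Santal\'o-type) argument. It is precisely here that the injectivity radius, rather than the Ricci curvature, enters: no such estimate can follow from the lower Ricci bound alone, as collapsing examples such as thin flat tori show. Applying it at each centre $x_i$ — which is legitimate because $r\leqslant\tfrac12\inj\leqslant\tfrac12\inj(x_i)$ — gives $\vol(B(x_i,r))\geqslant c_n r^n=c_n 2^{-n}(\min\{\rho,\inj\})^n$ for every $i$.

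Feeding this into the previous display yields $\card(B_i)\leqslant 2^n c_n^{-1}\vol(M)\,(\min\{\rho,\inj\})^{-n}$, which is the assertion with $c_1=2^n/c_n$. The only non-routine step is Croke's volume estimate; everything else is bookkeeping identical to the proof of Lemma~\ref{cp:l1}(i). (If one were willing to assume a two-sided sectional curvature bound, G\"unther's volume comparison theorem would furnish the required lower bound for $\vol(B(x_i,r))$ directly; but under the present hypotheses Croke's inequality is the appropriate tool.)
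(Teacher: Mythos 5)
Your proof is correct and takes essentially the same route as the paper: the volume-counting argument from Lemma~\ref{cp:l1} combined with Croke's lower volume bound for geodesic balls below the injectivity radius. The only cosmetic difference is that you handle the two regimes $\rho\leqslant\inj$ and $\rho>\inj$ in one stroke by working with $r=\tfrac12\min\{\rho,\inj\}$, whereas the paper treats them as two explicit cases.
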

\begin{proof}
As in the proof of Lemma~\ref{cp:l1}, we see that
$$
m=\card(B_i)\leqslant\vol(M)/\vol(B(x_{i_0},\rho/2))
$$
for some point $x_{i_0}$. Recall that by~\cite[Prop.~14]{Cro80} the volume of a geodesic ball satisfies the inequality
$$
c_2\rho^n\leqslant\vol(B(x,\rho/2))\qquad\text{for any}\quad\rho\leqslant\inj,
$$
where $c_2$ is a constant that depends on $n$ only. For $\rho\geqslant\inj$, we clearly have 
$$
c_2\inj^n\leqslant\vol(B(x,\inj/2))\leqslant\vol(B(x,\rho/2)).
$$
Combining these inequalities with the bound for the cardinality $m$ above, we complete the proof of the lemma. 
\end{proof}

\subsection{Coverings by metric balls: domains}
Now we discuss versions of the above statements for coverings of domains in Riemannian manifolds.
\begin{lemma}
\label{np:l}
Let $(M,g)$ be a complete Riemannian manifold whose Ricci curvature is bounded below, $\Ric\geqslant -(n-1)\kappa$, where $\kappa\geqslant 0$ and $n$ is the dimension of $M$. Let $\Omega\subset M$ be a precompact domain, and $(B_i)$ be its covering by  balls $B_i=B(x_i,\rho)$ such that $x_i\in\Omega$ and the balls $B(x_i,\rho/2)$ are disjoint. Then:
\begin{itemize} 
\item[(i)] if $\Omega$ is convex, the conclusions of Lemma~\ref{cp:l1} hold, where $d=d(\Omega)$ is the (extrinsic) diameter of $\Omega$;
\item[(ii)] if $\Omega$ has a smooth boundary, the cardinality of the covering $(B_i)$ is not greater than $c_3\vol(\Omega)(\min\{\rho,\inj,\rad\})^{-n}$, where $c_3$ is a constant that depends on $n$ only, $\inj=\inj(\Omega)$ is the injectivity radius of $\Omega$, and $\rad=\rad(\Omega)$ is the maximal radius of an inscribed rolling ball; 
\item[(iii)] if $M$ has finite volume, then the cardinality of the covering $(B_i)$ is not greater than $c_4\vol(M)(\min\{\rho,\inj\})^{-n}$.
\end{itemize}
\end{lemma}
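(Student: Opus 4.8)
The plan is to estimate the cardinality $m=\card(B_i)$ in each case by the device already used in Lemmas~\ref{cp:l1} and~\ref{cp:l2}: bound the volume occupied by the disjoint balls $B(x_i,\rho/2)$ from above (by $\vol(M)$, by $\vol(\Omega)$, or by the volume of a single ball containing all of them) and from below by $m$ times a uniform lower bound for the volume of one such ball, or of its intersection with $\Omega$; the multiplicity assertion in~(i) is then read off from a purely local argument. Throughout I will use the ball volume comparison~\eqref{vc} and the lower bound $\vol(B(x,s/2))\geqslant c(n)s^n$, valid for $s\leqslant\inj(x)$, that already appears in the proof of Lemma~\ref{cp:l2}.

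Parts~(i) and~(iii) require nothing beyond what is in Lemmas~\ref{cp:l1} and~\ref{cp:l2}. For~(i), pick $x_{i_0}$ minimizing $\vol(B(x_i,\rho/2))$; since all centres lie in $\Omega$ and $\diam(\Omega)=d$, each $B(x_i,\rho/2)$ sits in $B(x_{i_0},d+\rho/2)\subset B(x_{i_0},2d)$ when $\rho\leqslant 2d$, so disjointness and~\eqref{vc} give $m\leqslant C(n)e^{2(n-1)d\sqrt\kappa}(d/\rho)^n$, a bound of the form asserted in Lemma~\ref{cp:l1}(i). The multiplicity bound of Lemma~\ref{cp:l1}(ii) involves neither $\Omega$ nor convexity: for $x\in M$ take $x_{i_0}$ minimizing $\vol(B(x_i,\rho/2))$ among the indices $i$ with $x\in B(x_i,2\rho)$, note $B(x_i,2\rho)\cup B(x_i,\rho/2)\subset B(x_{i_0},6\rho)$, and apply~\eqref{vc} to get ${12}^ne^{6(n-1)\rho\sqrt\kappa}$. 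For~(iii), the $B(x_i,\rho/2)$ are disjoint in $M$, so $m\,\inf_i\vol(B(x_i,\rho/2))\leqslant\vol(M)$; since $\inj(x_i)\geqslant\inj(\Omega)$ for $x_i\in\Omega$, the ball volume lower bound gives $\vol(B(x_i,\rho/2))\geqslant c(n)(\min\{\rho,\inj(\Omega)\})^n$, and combining the two inequalities yields the claim, with no regularity of $\partial\Omega$ used.

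The substance of the lemma is part~(ii), where the balls $B(x_i,\rho/2)$ need not lie in $\Omega$. The key is the uniform lower bound
$$
\vol\bigl(B(x_i,\rho/2)\cap\Omega\bigr)\geqslant c(n)\,r^n,\qquad r:=\tfrac18\min\{\rho,\inj(\Omega),\rad(\Omega)\},
$$
for every $i$; granting it, the sets $B(x_i,\rho/2)\cap\Omega$ are disjoint subsets of $\Omega$, hence $m\,c(n)r^n\leqslant\vol(\Omega)$, which is the assertion with $c_3=c_3(n)$. To prove the lower bound I would distinguish two cases by $\delta:=\dist(x_i,\partial\Omega)>0$. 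If $\delta\geqslant r$, then $B(x_i,r)\subset\Omega$ (a geodesic ball is connected and so cannot meet $M\setminus\overline\Omega$ without crossing $\partial\Omega$) and $B(x_i,r)\subset B(x_i,\rho/2)$, so, as $2r\leqslant\inj(\Omega)\leqslant\inj(x_i)$, the ball volume estimate gives $\vol(B(x_i,r))\geqslant c(n)r^n$. If $\delta<r$, let $p\in\partial\Omega$ realize $\dist(x_i,\partial\Omega)$ and let $B(z,\rad)\subset\Omega$, $\rad=\rad(\Omega)$, be an interior rolling ball tangent to $\partial\Omega$ at $p$; then $\dist(p,z)=\rad$ with $z$ on the inward normal at $p$, the minimizing $x_i$-to-$p$ geodesic is normal at $p$, so $x_i,p,z$ lie on a common geodesic and $\dist(x_i,z)=\rad-\delta\geqslant\rad-r\geqslant 7r$ (the equality from $\dist(x_i,z)\leqslant\rad-\delta$ and $\rad=\dist(p,z)\leqslant\delta+\dist(x_i,z)$). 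Letting $w$ be the point of a minimizing $x_i$-to-$z$ geodesic at distance $r$ from $x_i$, one checks $\dist(w,z)+r=\rad-\delta<\rad$ and $\dist(x_i,w)+r=2r\leqslant\rho/2$, so $B(w,r)\subset B(z,\rad)\cap B(x_i,\rho/2)\subset\Omega\cap B(x_i,\rho/2)$, and the ball volume estimate at $w$ (legitimate since $w\in\Omega$ and $2r\leqslant\inj(w)$) again gives $\vol(B(w,r))\geqslant c(n)r^n$.

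I expect the main obstacle to be precisely this boundary case in~(ii): one must use the interior rolling ball to capture a definite dimensional fraction of $B(x_i,\rho/2)$ inside $\Omega$ while keeping the three scales $\rho$, $\inj(\Omega)$, $\rad(\Omega)$ in order, which is why $r$ is taken to be an eighth of their minimum. The remaining ingredients --- that the nearest-point geodesic to a smooth hypersurface is normal and, by the very definition of the rolling ball, extends through its centre; the connectedness argument when $\delta\geqslant r$; and the ball volume lower bound already invoked in Lemma~\ref{cp:l2} --- are standard, so once the case split and the choice of $r$ are fixed the remaining estimates are routine. Parts~(i) and~(iii), by contrast, are mere rephrasings of Lemmas~\ref{cp:l1} and~\ref{cp:l2}.
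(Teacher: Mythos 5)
Your proof is correct and runs largely parallel to the paper's, but it diverges in two noteworthy places. In part~(i) you bound the cardinality by packing the ambient balls $B(x_i,\rho/2)$ into a single ball $B(x_{i_0},2d)$ and applying the plain volume comparison~\eqref{vc}; this argument makes no use of convexity, but it yields a constant of the form $4^n e^{2(n-1)d\sqrt\kappa}(d/\rho)^n$ rather than the $2^n e^{(n-1)d\sqrt\kappa}(d/\rho)^n$ of Lemma~\ref{cp:l1}(i). The paper instead passes through relation~\eqref{lemma:aux1} (working with $B(x_i,\rho/2)\cap\Omega$ and $\vol(\Omega)=\vol(B(x_{i_0},d)\cap\Omega)$) and invokes the star-shaped Gromov--Bishop comparison of Lemma~\ref{GB} --- this is the step where convexity enters and where the exact constants of Lemma~\ref{cp:l1} are reproduced. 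Functionally the two bounds behave the same downstream in Theorem~\ref{neumann:gromov}, but since the lemma states ``the conclusions of Lemma~\ref{cp:l1} hold,'' your route does not give a verbatim proof of the claim, only a bound of the same shape. In part~(ii) the idea is identical to the paper's --- use an interior rolling ball at the nearest boundary point to locate a geodesic ball of controlled radius inside $B(x_i,\rho/2)\cap\Omega$, then apply Croke's volume lower bound --- but the bookkeeping differs: the paper takes a rolling ball of small radius $\rho/4$ tangent at $p$, whose centre $\tilde x$ satisfies $\dist(x,\tilde x)<\rho/4$ and $B(\tilde x,\rho/4)\subset B(x,\rho/2)\cap\Omega$ at once, whereas you use the rolling ball of full radius $\rad$ and pass to an intermediate point $w$ on the geodesic toward its centre. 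Both work, but your version needs one small repair: $\rad(\Omega)$ is defined as a supremum, so an interior rolling ball of radius exactly $\rad$ need not exist; you should use a radius $\rad'<\rad$ close to $\rad$, or (as the paper does) a rolling ball of small radius, whose existence is directly guaranteed by the definition. Your handling of the multiplicity bound in~(i) and of part~(iii) coincides with the paper's.
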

In the sequel we use the following folkloric version of Gromov-Bishop relative volume comparison theorem, see~\cite[p.524]{Gro}; we outline its proof for the sake of completeness.
\begin{lemma}
\label{GB}
Let $(M,g)$ be a complete Riemannian manifold whose Ricci curvature is bounded below, $\Ric\geqslant -(n-1)\kappa$, where $\kappa\geqslant 0$, and $\Omega\subset M$ be a precompact domain that is star-shaped with respect to a point $x\in\bar\Omega$. Then the quotient $\vol(B(x,r)\cap\Omega)/\vol(B_\kappa(r))$, where $B_\kappa(r)$ is a ball in the space of constant curvature $(-\kappa)$, is a non-increasing function in $r>0$. In particular, for any $0<r\leqslant R$ we have
$$
\vol(B(x,R)\cap\Omega)\leqslant e^{(n-1)R\sqrt{\kappa}}(R/r)^n\vol(B(x,r)\cap\Omega).
$$
\end{lemma}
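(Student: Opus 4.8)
The plan is to work in geodesic polar coordinates centred at $x$. For a unit vector $\theta$ in the unit sphere $U_xM\subset T_xM$, let $c(\theta)\in(0,\infty]$ be the distance to the cut locus along the ray $t\mapsto\exp_x(t\theta)$, and let $\sigma(\theta)\leqslant c(\theta)$ be the supremum of those $t$ for which $\exp_x(s\theta)\in\Omega$ for every $s\in[0,t)$. Since $\Omega$ is star-shaped with respect to $x$, the set of $t\theta$ with $\exp_x(t\theta)\in\Omega$ is star-shaped about the origin in $T_xM$, so each radial ray meets $\Omega$ in an interval issuing from $0$; combined with the fact that $\exp_x$ is injective and distance-realising on $\{t\theta:t<c(\theta)\}$, this shows that $B(x,r)\cap\Omega$ agrees, up to a set of measure zero (the image of the cut locus), with the image under $\exp_x$ of $\{t\theta:\theta\in U_xM,\ 0\leqslant t<\min(r,\sigma(\theta))\}$. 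Writing the Riemannian volume element as $\mathcal A(t,\theta)\,dt\,d\theta$, this gives
\[
\vol(B(x,r)\cap\Omega)=\int_{U_xM}\int_0^{\min(r,\sigma(\theta))}\mathcal A(t,\theta)\,dt\,d\theta ,
\]
while $\vol(B_\kappa(r))=a_n\int_0^r s_\kappa(t)^{n-1}\,dt$, where $a_n=\vol(U_xM)$ is the area of the unit $(n-1)$-sphere, $s_\kappa(t)=\sinh(\sqrt\kappa\,t)/\sqrt\kappa$ for $\kappa>0$ and $s_0(t)=t$.

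The second step is the Bishop comparison estimate: under $\Ric\geqslant-(n-1)\kappa$, for each fixed $\theta$ the function $t\mapsto\mathcal A(t,\theta)/s_\kappa(t)^{n-1}$ is non-increasing on $(0,c(\theta))$, and extending $\mathcal A(\cdot,\theta)$ by zero for $t\geqslant c(\theta)$ keeps this ratio non-increasing on all of $(0,\infty)$. Multiplying by $\mathbf 1_{\{t<\sigma(\theta)\}}$ — a non-negative non-increasing factor in $t$ — we conclude that $g_\theta(t):=\mathcal A(t,\theta)\,\mathbf 1_{\{t<\sigma(\theta)\}}$ satisfies that $g_\theta(t)/s_\kappa(t)^{n-1}$ is non-increasing in $t$. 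Averaging over $\theta$, the function $r\mapsto\big(\int_{U_xM}g_\theta(r)\,d\theta\big)\big/\big(a_n s_\kappa(r)^{n-1}\big)$ is then non-increasing in $r$, being an average of non-increasing functions.

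Finally I invoke the elementary calculus lemma: if $F,G:(0,\infty)\to[0,\infty)$ are such that $F/G$ is non-increasing, then $r\mapsto\int_0^rF\big/\int_0^rG$ is non-increasing, since the numerator of the derivative of the latter equals $\int_0^r G(r)G(s)\big(F(r)/G(r)-F(s)/G(s)\big)\,ds\leqslant 0$. Applying this with $F(r)=\int_{U_xM}g_\theta(r)\,d\theta$ and $G(r)=a_n s_\kappa(r)^{n-1}$, and using the two displayed identities, shows that $\vol(B(x,r)\cap\Omega)/\vol(B_\kappa(r))$ is non-increasing, which is the first assertion. The stated inequality then follows from monotonicity, $\vol(B(x,R)\cap\Omega)\leqslant\big(\vol(B_\kappa(R))/\vol(B_\kappa(r))\big)\vol(B(x,r)\cap\Omega)$, together with the elementary space-form bound $\vol(B_\kappa(R))/\vol(B_\kappa(r))\leqslant e^{(n-1)R\sqrt\kappa}(R/r)^n$ already used for~\eqref{vc} and~\eqref{rem}. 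The step requiring the most care is the first one: one must check that the star-shapedness hypothesis, read along the radial geodesics from $x$, really lets one describe $B(x,r)\cap\Omega$ by a radial truncation inside the injectivity domain, with the remaining cut-locus part negligible; after that, the argument is the standard Bishop–Gromov monotonicity.
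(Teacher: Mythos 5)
Your proof is correct, and it is a genuinely different argument from the one in the paper. The paper's proof reduces the star-shaped domain to a cone with a \emph{fixed} angular sector: for a given pair $0<r\leqslant R$ it takes $S=\{\xi\in S^{n-1}:\exp_x(r\xi)\in\Omega\}$, cites the Gromov--Bishop monotonicity for $\vol(B(x,\cdot)\cap C_S)/\vol(B_\kappa(\cdot))$ as a known fact, and then passes from $C_S$ to $\Omega$ via the inclusions $B(x,r)\cap C_S\subset B(x,r)\cap\Omega$ and $(B(x,R)\setminus B(x,r))\cap\Omega\subset(B(x,R)\setminus B(x,r))\cap C_S$, together with the elementary observation that $a/b\leqslant(a-h)/(b-h)$ for $0\leqslant h<b\leqslant a$. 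You instead keep the angular cutoff $\theta$-dependent: you truncate each radial ray at $\sigma(\theta)$, use the pointwise Bishop monotonicity of $\mathcal A(t,\theta)/s_\kappa(t)^{n-1}$, the fact that multiplying a non-negative non-increasing function by the non-increasing indicator $\mathbf 1_{\{t<\sigma(\theta)\}}$ preserves monotonicity, and the standard ratio-of-integrals lemma. In effect you open up the black box that the paper cites, which makes the proof self-contained and proves the monotonicity of $\vol(B(x,r)\cap\Omega)/\vol(B_\kappa(r))$ directly, while the paper's route proves the two-point inequality and gains brevity by leaning on the cone case from Chavel; the paper's $h$-trick also avoids having to discuss polar coordinates, cut loci, and measure-zero sets explicitly, which you correctly flag as the delicate part of your version. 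Both arguments rest on the same underlying monotonicity of the polar volume density, so neither is more general, but yours is arguably more transparent once the polar parametrisation is set up carefully.

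Two small points to tighten. First, the definition ``let $\sigma(\theta)\leqslant c(\theta)$ be the supremum of those $t$ ...'' reads as an assertion rather than a definition; you should write $\sigma(\theta):=\min\bigl(c(\theta),\,\sup\{t:\exp_x(s\theta)\in\Omega\ \forall s\in[0,t)\}\bigr)$ and then justify that, because $\Omega$ is open and star-shaped and the cut locus is null, the set $B(x,r)\cap\Omega$ differs from $\exp_x\{t\theta:0\leqslant t<\min(r,\sigma(\theta))\}$ by a null set. Second, when you say the preimage of $\Omega$ in $T_xM$ is star-shaped, this is only asserted (and only needed) inside the segment domain $\{t\theta:t<c(\theta)\}$, where each point of $\Omega$ has a unique minimizing geodesic back to $x$; outside it the statement is not meaningful, so it is worth making the restriction explicit.
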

\begin{proof}
For a given subset $S$ of a unit sphere $S^{n-1}\subset\mathbb R^n$ denote by $C_S$ the cone $\{\exp_x(t\xi): t>0, \xi\in S\}$. The standard proof of the Gromov-Bishop comparison theorem, see~\cite[p.134-135]{Cha}, shows that the quotient $\vol(B(x,r)\cap C_S)/\vol(B_\kappa(r))$ is a non-increasing function in $r>0$. For a given $0<r\leqslant R$ define $S$ as the set formed by $\xi\in S^{n-1}$ such that $\exp_x(r\xi)\in\Omega$. Since $\Omega$ is star-shaped with respect to $x$, we conclude that:
\begin{itemize}
\item[{\it (a)}] $B(x,r)\cap C_S\subset B(x,r)\cap\Omega$, 
\item[{\it (b)}] $(B(x,R)\backslash B(x,r))\cap\Omega\subset(B(x,R)\backslash B(x,r))\cap C_S$.
\end{itemize}
By relation $(a)$ the quantity
$$
h:=\vol(B(x,r)\cap\Omega)-\vol(B(x,r)\cap C_S) 
$$
is non-negative, and by~$(b)$, we obtain
$$
\vol(B(x,R)\cap\Omega)-h\leqslant\vol(B(x,R)\cap C_S).
$$
Finally, using the Gromov-Bishop theorem for the intersections of balls with cones, we obtain
\begin{multline*}
\vol(B(x,R)\cap\Omega)/\vol(B(x,r)\cap\Omega)\leqslant (\vol(B(x,R)\cap\Omega)-h)/(\vol(B(x,r)\cap\Omega)-h)\\ \leqslant \vol(B(x,R)\cap C_S)/\vol(B(x,r)\cap C_S)\leqslant \vol(B_\kappa(R))/\vol(B_\kappa(r)).
\end{multline*}
The last statement of the lemma follows from the standard formula for the volume $\vol(B_k(r))$, see~\cite{Cha}, which leads to the estimate for the quotient $\vol(B_\kappa(R))/\vol(B_\kappa(r))$.
\end{proof}

\begin{proof}[Proof of Lemma~\ref{np:l}]
Following the argument in the proof of Lemma~\ref{cp:l1}, we see that
\begin{equation}
\label{lemma:aux1}
m=\card(B_i)\leqslant\vol(\Omega)/\vol(B(x_{i_0},\rho/2)\cap\Omega)
\end{equation}
for some point $x_{i_0}\in\Omega$. If $d=d(\Omega)$ is the diameter of $\Omega$, then $\Omega$ lies in the ball $B(x_{i_0},d)$, and by Lemma~\ref{GB}, we obtain
$$
m\leqslant\vol(B(x_{i_0},d)\cap\Omega)/\vol(B(x_{i_0},\rho/2)\cap\Omega)\leqslant 2^ne^{(n-1)d\sqrt{\kappa}}(d/\rho)^n.
$$
The estimate for the multiplicity of the covering $(B_i)$ is the same as in the proof of Lemma~\ref{cp:l1}. We proceed with the statement~(ii): by relation~\eqref{lemma:aux1} for a proof it is sufficient to show that
\begin{equation}
\label{lemma:aux2}
c_5\rho^n\leqslant\vol(B(x,\rho/2)\cap\Omega)\qquad\text{for any}\quad\rho\leqslant\min\{\inj,\rad\}.
\end{equation}
To see that the above relation holds note that for any ball $B(x,r)$, where $x\in\Omega$ and $r< 2\rad(\Omega)$, there exists a point $\tilde x\in B(x,r)$ such that 
$$
\dist(x,\tilde x)< r/2\qquad\text{and}\qquad B(\tilde x,r/2)\subset B(x,r)\cap\Omega.
$$
Indeed, the statement is clear if $B(x,r/2)\subset\Omega$. If $B(x,r/2)$ does not lie entirely in $\Omega$, then since $B(\tilde x,r/2)\subset\Omega$, one can take an inscribed ball that touches the boundary $\partial\Omega$ at a point $p$ where the minimum of the distance $\dist(q,x)$, while $q$ ranges over $\partial\Omega$, is achieved. It is straightforward to see that the point $x$ belongs to the shortest geodesic arc joining $\tilde x$ and $p$, which meets the boundary $\partial\Omega$ at the point $p$ orthogonally.  In particular, the ball $B(\tilde x,r/2)$ is also contained in the ball $B(x,r)$. Thus, we conclude that under our hypotheses
$$
\vol(B(\tilde x,\rho/4))\leqslant\vol(B(x,\rho/2)\cap\Omega),
$$
and by~\cite[Prop.~14]{Cro80} the quantity on the left-hand side is at least $c_5\rho^n$ when $\rho/2<\inj(\Omega)$. Combining the last statement with the hypothesis $\rho/2<\rad(\Omega)$, we prove relation~\eqref{lemma:aux2}.

Under the hypotheses of the last statement of the lemma we may bound the cardinality $m$ of the covering by $\vol(M)/\vol(B(x_{i_0},\rho/2))$, and then appeal directly to Croke's result~\cite[Prop.~14]{Cro80} in the same fashion as in the proof of Lemma~\ref{cp:l2}.
\end{proof}

\section{Lower eigenvalue bounds}
\label{proofs:lower}
\subsection{Proof of Theorem~\ref{gromov}: Gromov's inequalities}
We prove estimate~\eqref{leb:gro1} for the counting function $N_g(\lambda)$.
{For a given real number $\lambda>0$ denote by $E(\lambda)$ the sum of all eigenspaces that correspond to the eigenvalues $\lambda_k(g)<\lambda$. Recall that by the  variational principle, for any $0\ne\varphi\in E(\lambda)$ we have
\begin{equation}
\label{rayleigh}
\int_M\abs{\nabla\varphi}^2d\vol<\lambda\int_M\varphi^2d\vol.
\end{equation}
For a given $\rho>0$ consider a covering of $M$ by balls $B_i=B(x_i,\rho)$  such that the balls $B(x_i,\rho/2)$ are disjoint. It can be obtained by choosing the collection of balls $B(x_i,\rho/2)$ to be a maximal collection of disjoint balls. Given such a covering $(B_i)$ we define the map
$$
\Phi_\lambda:E(\lambda)\to\mathbb R^m,\qquad u\mapsto\vol(B_i)^{-1}\int_{B_i}u\, d\vol,
$$
where $m$ stands for the cardinality of $(B_i)$, and $i=1,\ldots, m$, cf.~\cite{ML}. We claim that there exists a constant $c_6$ depending on the dimension $n$ only such that if $\lambda^{-1}\geqslant c_6\rho^{2}e^{7(n-1)\rho\sqrt{\kappa}}$, then the map $\Phi_\lambda$ is injective. To see this we define $c_6={12}^{n}C_N$, where $C_N=C_N(n,2)$ is the Poincar\'e constant from Proposition~\ref{np:intro}, and argue by assuming the contrary. Suppose that $\varphi\ne 0$ belongs to the kernel of $\Phi_\lambda$. Then we obtain
\begin{multline*}
\int_M\varphi^2d\vol\leqslant\sum_i\int_{B_i}\varphi^2d\vol\leqslant C_N\rho^2e^{(n-1)\rho\sqrt{\kappa}}\sum_i\int_{2B_i}\abs{\nabla\varphi}^2d\vol\\
\leqslant c_6\rho^2e^{7(n-1)\rho\sqrt{\kappa}}\int_M\abs{\nabla\varphi}^2d\vol,
\end{multline*}
where we used Proposition~\ref{np:intro} in the second inequality and Lemma~\ref{cp:l1} in the last. Now combining these relations with~\eqref{rayleigh}, we conclude that $\lambda^{-1}<c_6\rho^{2}e^{7(n-1)\rho\sqrt{\kappa}}$, and arrive at a contradiction. Thus, for a sufficiently small $\rho$ the map $\Phi_\lambda$ is injective, and the value $N_g(\lambda)$ is not greater than the cardinality $m$ of a covering $(B_i)$.}

For a given $\lambda>0$ we set
$$
\rho_0=(c_6\lambda e^{14(n-1)d\sqrt{\kappa}})^{-1/2}, 
$$
where $d$ is the diameter of $M$. When $\rho_0\leqslant 2d$, it is straightforward to check that the relation $\lambda^{-1}\geqslant c_6\rho_0^{2}e^{7(n-1)\rho_0\sqrt{\kappa}}$ holds, and  by Lemma~\ref{cp:l1} we obtain
$$
N_g(\lambda)\leqslant m\leqslant 2^ne^{(n-1)d\sqrt{\kappa}}(d/\rho_0)^n\leqslant C_4^{1+d\sqrt{\kappa}}d^n\lambda^{n/2}.
$$
To treat the case $\rho_0>2d$, note that there is only one covering with balls of radius $\rho\geqslant 2d$ that satisfies our hypotheses, and it consists of only one ball. In particular, if $\rho_0>2d$, then the covering under the consideration coincides with the one for $\rho_*=2d$ for which $\Phi_\lambda$ is also injective. Indeed, by the definition of $\rho_0$, it is straightforward to see that the relation $\rho_0\geqslant 2d$ implies that $\lambda^{-1}\geqslant c_6\rho_*^{2}e^{7(n-1)\rho_*\sqrt{\kappa}}$. Since such a  covering consists of only one ball, we conclude that in this case $N_g(\lambda)$ is not greater than $1$. Combining these two cases, we finish the proof of the theorem.\qed
\begin{remark}
The idea to use the bounds for the first eigenvalue on small sets to get estimates for higher eigenvalues is not new; see, for example, the already mentioned papers by Cheng~\cite{Che75}, Gromov~\cite[Appendix~C]{Gro}, and Li and Yau~\cite{LY80}. A similar strategy has been used in~\cite{ML} in the context of the multiplicity bounds for Laplace eigenvalues. Note that the eigenvalue multiplicity bound for closed manifolds of non-negative Ricci curvature obtained in~\cite{ML}, see formula~$(6)$ in~\cite[Theorem~3.1]{ML}, is a partial case of~\eqref{mbc:diameter},  which is a consequence of the results of Cheng and Gromov cited above.
\end{remark}

\subsection{Proof of Theorem~\ref{gromov}: Buser's inequalities}
Consider a covering of $M$ by balls $B_i=B(x_i,\rho)$  such that $B(x_i,\rho/2)$ form a maximal family of disjoint balls. As is shown above, if a real number $\lambda>0$ satisfies the inequality $\lambda^{-1}\geqslant c_6\rho^{2}e^{7(n-1)\rho\sqrt{\kappa}}$, where $c_6$ depends on $n$ only, then $N_g(\lambda)$ is not greater than the cardinality $m=\card(B_i)$. In this case by Lemma~\ref{cp:l2} we have
\begin{equation}
\label{eq:count}
N_g(\lambda)\leqslant m\leqslant c_1\vol(M)(\rho^{-n}+\inj^{-n}).
\end{equation}
The hypothesis on $\rho$ is clearly satisfied, if $\lambda^{-1}\geqslant 2c_6\rho^2$ {and} $2\geqslant e^{7(n-1)\rho\sqrt{\kappa}}$. Thus, choosing $\rho=\rho_0$ as the minimum of the values $(2c_6\lambda)^{-1/2}$  and $\ln 2(7(n-1)\sqrt{\kappa})^{-1}$, by relation~\eqref{eq:count} we obtain
$$
N_g(\lambda)\leqslant C_3\vol(M)(\lambda^{n/2}+\kappa^{n/2}+\inj^{-n}),
$$
where the value of the constant $C_3$ depends on $c_1$, $c_6$, and the dimension $n$.
\qed

\subsection{Proof of Theorem~\ref{neumann:gromov}: Gromov's inequalities for domains}
The proof of estimate \eqref{leb:gro1} for convex domains follows a line of argument similar to the one in the proof of Theorem~\ref{gromov}; it uses the Neumann-Poincar\'e inequality in Proposition~\ref{np4np} and Lemma~\ref{np:l}.

More precisely, for $\lambda>0$ denote by $E(\lambda)$ the sum of all eigenspaces that correspond to the Neumann eigenvalues $\lambda_k(g)<\lambda$. Let $(B_i)$ be a covering of $\Omega$ by balls $B_i=B(x_i,\rho)$ such that $x_i\in\Omega$ and the smaller balls $B(x_i,\rho/2)$ are disjoint. We claim that if $\lambda^{-1}\geqslant c_6\rho^{2}e^{7(n-1)\rho\sqrt{\kappa}}$, then the map
$$
\Phi_\lambda:E(\lambda)\to\mathbb R^m,\qquad u\mapsto\vol(B_i\cap\Omega)^{-1}\int_{B_i\cap\Omega}ud\vol,
$$
is injective, and the value $N_g(\lambda)$ is not greater than $m=\card(B_i)$. Indeed, suppose that a function $\varphi\ne 0$ belongs to the kernel of $\Phi_\lambda$. Then, setting $c_6={12}^nC_N$ with $C_N=C_N(n,2)$ being the constant from Proposition~\ref{np4np}, we obtain
\begin{multline*}
\int_\Omega\varphi^2d\vol\leqslant\sum_i\int_{B_i\cap\Omega}\varphi^2d\vol\leqslant C_N\rho^2e^{(n-1)\rho\sqrt{\kappa}}\sum_i\int_{2B_i\cap\Omega}\abs{\nabla\varphi}^2d\vol\\
\leqslant c_6\rho^2e^{7(n-1)\rho\sqrt{\kappa}}\int_\Omega\abs{\nabla\varphi}^2d\vol,
\end{multline*}
where in the last relation we used Lemma~\ref{np:l}. Now we arrive at a contradiction in the same fashion as above.

For a given $\lambda>0$ we set
$$
\rho_0=(c_6\lambda e^{14(n-1)d\sqrt{\kappa}})^{-1/2}, 
$$
where $d$ is the diameter of $\Omega$. When $\rho_0\leqslant 2d$, it is straightforward to check that the above hypothesis on $\lambda$ holds, and the value $N_g(\lambda)$ is bounded by the cardinality of the 
covering $B(x_i,\rho_0)$ such that $x_i\in\Omega$ and the balls $B(x_i,\rho_0/2)$ are disjoint. Then, by Lemma~\ref{np:l} we obtain
$$
N_g(\lambda)\leqslant m\leqslant 2^ne^{(n-1)d\sqrt{\kappa}}(d/\rho_0)^n\leqslant C_{4}^{1+d\sqrt{\kappa}}d^n\lambda^{n/2}.
$$
The case $\rho_0>2d$ is treated in the fashion similar to the one in the proof of Theorem~\ref{gromov}.
\qed

\subsection{Proof of Theorem~\ref{neumann:gromov}: Buser's inequalities for domains}
Let $(B_i)$ be a covering of $\Omega$ by balls $B_i=B(x_i,\rho)$ such that $x_i\in\Omega$ and the smaller balls $B(x_i,\rho/2)$ form a maximal family of disjoint balls. As is shown in the proof of Gromov's inequalities for Neumann eigenvalues, if $\lambda^{-1}\geqslant c_6\rho^{2}e^{7(n-1)\rho\sqrt{\kappa}}$, then the value $N_g(\lambda)$ is not greater than $m=\card(B_i)$. Now by Lemma~\ref{np:l} we have
$$
N_g(\lambda)\leqslant m\leqslant c_3\vol(\Omega)(\rho^{-n}+\inj^{-n}+\rad^{-n}).
$$ 
Choosing $\rho=\rho_0$ as the minimum of the values $(2c_6\lambda)^{-1/2}$  and $\ln 2(7(n-1)\sqrt{\kappa})^{-1}$, we obtain the desired bound on the counting function.
\qed
\begin{remark}
\label{finite}
When a manifold $M$ has a finite volume, the argument above yields the estimate
$$
N_\Omega(\lambda)\leqslant C\cdot\vol_g(M)(\lambda^{n/2}+\kappa^{n/2}+\inj(\Omega)^{-n}) \qquad\text{for any}\quad \lambda \geqslant 0,
$$
for the Neumann eigenvalues counting function of any compact geodesically convex domain $\Omega\subset M$. Indeed, this is a consequence of the following estimate for the cardinality $m$ of the covering $(B_i)$ with the properties described above:
$$
m=\card(B_i)\leqslant c_4\vol(M)(\rho^{-n}+\inj^{-n}),
$$ 
see Lemma~\ref{np:l}.
\end{remark}

\section{Upper eigenvalue bounds}
\label{proofs:upper}
\subsection{Proof of Theorem~\ref{ueb:np}}
Since $\Omega\subset M$ is geodesically convex, by Lemma~\ref{GB} for any $x\in\bar\Omega$ and any $0<r\leqslant 1/\sqrt{\kappa}$ we have
\begin{equation}
\label{gb}
\vol(B(x,r)\cap\Omega)/\vol(B(x,r/2)\cap\Omega)\leqslant 2^ne^{n-1}.
\end{equation}
For a given integer $k>0$ let $\rho(k)$ be the supremum of all $r>0$ such that there exists $k$ points $x_1,\ldots, x_k\in\bar\Omega$ with $\dist(x_i,x_j)>r$ for all $i\ne j$. We consider the following cases.

\smallskip
\noindent
{\it Case 1: $\rho(k)\geqslant 1/\sqrt{\kappa}$.} Then for any $r<1/\sqrt{\kappa}$ there exist points $x_1,\ldots,x_k\in\bar\Omega$ such that the balls $B(x_i,r/2)$ are disjoint. Consider the plateau functions $u_i$ supported in $B(x_i,r/2)$ such that $u_i\equiv 1$ on $B(x_i,r/4)$ and $\abs{\nabla u_i}\leqslant 4/r$. Their restrictions to $\Omega$ can be used as test-functions for the Neumann eigenvalue $\lambda_k(g)$, and by the variational principle we obtain
\begin{multline*}
\lambda_k(g)\leqslant\max_i\left(\int_\Omega\abs{\nabla u_i}^2d\vol\right)/\left(\int_\Omega u_i^2d\vol\right)\\
\leqslant 16r^{-2}\max_i\vol(B(x_i,r/2)\cap\Omega)/\vol(B(x_i,r/4)\cap\Omega)\leqslant 2^{n+4}e^{n-1}r^{-2}.
\end{multline*}
Taking the limit as $r\to 1/\sqrt{\kappa}$, we see that $\lambda_k(g)\leqslant C_{10}\kappa$.

\smallskip
\noindent
{\it Case 2: $\rho(k)<1/\sqrt{\kappa}$.} Following the argument above, we see that $\lambda_k(g)\leqslant C_{10}r^{-2}$ for any $0<r<\rho(k)$, and tending $r\to\rho(k)$, we obtain that $\lambda_k(g)\leqslant C_{10}\rho(k)^{-2}$. Now we claim that $\rho(k)\geqslant d/k$. Indeed, to see this we note that the closure a convex domain $\Omega$ contains a geodesic arc whose length equals the diameter $d$. Breaking it into sub-arcs of the length $d/k$, we conclude that  $\rho(k)\geqslant (d/k)$, and hence, $\lambda_k(g)\leqslant C_{10}(k/d)^2$. Taking into account both cases we finish the proof of the theorem.
\qed

\subsection{Proof of Theorem \ref{ueb:dp}: Cheng's inequalities for the Dirichlet problem}
Below we give an argument based on Cheng's comparison theorem for the principal  Dirichlet eigenvalue~\cite{Che75}; however, one can also argue as in the proof of Theorem~\ref{ueb:np} using the volume comparison theorem and constructing test-functions explicitly. 

Denote by $\overline{\dist}(x,y)$ the intrinsic distance on $\Omega$, that is the infimum of lengths of paths in $\Omega$ joining the points $x$ and $y$. Note that the closure of $\Omega$ contains a continuous path $\gamma$ whose length equals the intrinsic diameter $\bar d=\bar d(\Omega)$; its existence follows from the Arzela-Ascoli theorem, see~\cite{BBI} for details. Breaking it into sub-arcs of the length $\bar d/(k+1)$, we find $(k+1)$ points $x_i$ on $\gamma$, where $i=0,\ldots, k$, such that
$$
\overline{\dist}(x_i,x_j)\geqslant2r:=\overline{d}/(k+1)\qquad\text{for any}\quad i\ne j.
$$
In particular, we see that the sets $D(x_i,r)=\{y\in \Omega: \overline{\dist}(x_i,y)< r\}$ are disjoint. Since the extrinsic distance is not greater than the intrinsic distance, we also conclude that each $D(x_i,r)$ lies in $B(x_i,r)\cap\Omega$. 
Denote by $k_0$ the integer $\lfloor\bar d/(4\rad(\Omega))\rfloor $, the greatest integer that is at most $\bar d/(4\rad(\Omega))$. Following the argument in the proof of Lemma~\ref{np:l}, it is straightforward to see that for any $k\geqslant k_0$ and any $0\leqslant i\leqslant k$ there exists $\tilde x_i\in B(x_i,r)\cap\Omega$ such that
\begin{equation}
\label{dp:aux3}
\overline\dist(\tilde x_i,x_i)=\dist(\tilde x_i,x_i)\leqslant r/2\quad\text{and}\quad B(\tilde x_i,r/2)\subset B(x_i,r)\cap\Omega.
\end{equation}
Here in the first relation we used the fact that the point $\tilde x_i$ can be chosen such that $\tilde x_i$ and $x_i$ lie on a extrinsically shortest geodesic arc that is contained in $\Omega$. Since any extrinsically shortest path joining points in the ball $B(\tilde x_i,r/4)$ lies in the ball $B(\tilde x_i,r/2)\subset\Omega$, we conclude that the intrinsic and extrinsic distances coincide on $B(\tilde x_i,r/4)$. Using the first relation in~\eqref{dp:aux3}, it is then straightforward to see that the ball $B(\tilde x_i,r/4)$ lies in $D(x_i,r)$. In particular, the balls $B(\tilde x_i,r/4)$ are disjoint, and by the domain monotonicity principle and Cheng's comparison for the principal eigenvalue, we obtain
$$
\nu_k(\Omega)\leqslant\max_i\nu_0(B(\tilde x_i,r/4))\leqslant\nu_0(B_\kappa(r/4)),
$$
where $B_\kappa(r/4)$ is a ball of radius $r/4$ in the simply connected space of constant sectional curvature $(-\kappa)$. As is shown by Cheng~\cite{Che75}, there is a constant $c_7$ depending on the dimension only such that
$$
\nu_0(B_\kappa(r/4))\leqslant c_7(\kappa+r^{-2}).
$$
 From the consideration above we conclude that for any integer $k\geqslant k_0$ the Dirichlet eigenvalue $\nu_k(\Omega)$ satisfies the inequality
\begin{equation}
\label{aux:k}
\nu_k(\Omega)\leqslant c_7\kappa+c_8((k+1)/\overline{d})^2.
\end{equation}
If $k_0=0$, then the statement is proved. If $k_0\geqslant 1$, then we can estimate the eigenvalue $\nu_{k_0}(\Omega)$ in the following fashion
\begin{equation}
\label{aux:k_0}
\nu_{k_0}(\Omega)\leqslant c_7(\kappa+r_0^{-2})\leqslant c_7(\kappa+\rad^{-2}),
\end{equation}
where we used that 
$$
2r_0:=\bar d/(k_0+1)\geqslant 2\rad(\Omega).
$$
Finally, combining relations~\eqref{aux:k} and~\eqref{aux:k_0}, for any $k\geqslant 0$ we obtain
$$
\nu_k(\Omega)\leqslant\max\{\nu_{k_0}(\Omega),\nu_k(\Omega)\}\leqslant C_{11}(\kappa+\rad^{-2})+C_{12}((k+1)/\overline{d})^2,
$$
which is the desired inequality~\eqref{cheng:dp}.
\qed

\subsection{Proof of Theorem \ref{ueb:dp}: Buser's inequalities for the Dirichlet problem}
We start with recalling that by the Bishop volume comparison theorem for any $0<r\leqslant 1/\sqrt{\kappa}$ the volume of a metric ball $B(x,r)$ in $M$ satisfies the inequality
\begin{equation}
\label{bishop}
\vol(B(x,r))\leqslant n\omega_n\int_0^r t^{n-1}e^{(n-1)t\sqrt{\kappa}}dt\leqslant \omega_ne^{n-1}r^n,
\end{equation}
where $\omega_n$ is the volume of a unit ball in the Euclidean space $\mathbb R^n$, see~\cite{Cha}.

For a given integer $k>0$ let $\rho(k+1)$ be the supremum of all $r>0$ such that there exist $(k+1)$ points $x_0,\ldots,x_k\in\Omega$ with $\dist(x_i,x_j)>r$ for any $i\ne j$. Following the argument in the proof of Theorem~\ref{ueb:np}, we consider the two cases below.

\smallskip
\noindent
{\it Case~1: $\rho(k+1)\geqslant 1/\sqrt{\kappa}$.} For every $r<1/\sqrt{\kappa}$ there exist points $x_0,\ldots, x_k$ such that the balls $B(x_i,r/2)$ are disjoint. When $r\leqslant \rad$, then repeating the argument in the proof of Lemma~\ref{np:l}, we find points $\tilde x_i\in B(x_i,r/2)$ such that
$$
B(\tilde x_i,r/4)\subset B(x_i,r/2)\cap\Omega.
$$
Now by the domain monotonicity and Cheng's comparison for the zero Dirichlet eigenvalue, we have 
\begin{equation}
\label{dp:aux}
\nu_k(\Omega)\leqslant\max_i\nu_0(B(\tilde x_i,r/4))\leqslant c_7(\kappa+r^{-2}).
\end{equation}
Taking the limit as $r\to\min\{\rad,1/\sqrt{\kappa}\}$, we obtain that $\nu_k(\Omega)$ is not greater than $c_9(\kappa+\rad^{-2})$.

\smallskip
\noindent
{\it Case~2: $\rho(k+1)<1/\sqrt{\kappa}$.} Following the line of an argument in Case~1, we see that for any $r<\min\{\rad,\rho(k+1)\}$ relation~\eqref{dp:aux} holds. Tending $r\to\min\{\rad,\rho(k+1)\}$, we obtain
\begin{equation}
\label{dp:aux2}
\nu_k(\Omega)\leqslant c_{7}(\kappa+\rad^{-2}+\rho(k+1)^{-2}).
\end{equation}
Now we estimate the value $\rho(k+1)$. For any given $s$ such that $\rho(k+1)<s<1/\sqrt{\kappa}$ let $m$ be the maximal number of points $y_1,\ldots,y_m\in\Omega$ such that $\dist(y_i,y_j)>s$ for any $i\ne j$. In particular, the balls $B(y_i,s)$, where $i=1,\ldots,m$, cover the domain $\Omega$. By the definition of $\rho(k+1)$ we also conclude that $m\leqslant k$. Thus, by inequality~\eqref{bishop}, we obtain
$$
\vol(\Omega)\leqslant\sum\vol(B(y_i,s))\leqslant m\omega_n e^{n-1}s^n\leqslant c_{10}ks^n.
$$
Letting $s$ tend to $\rho(k+1)$, we further obtain
$$
\rho(k+1)^{-2}\leqslant (c_{10})^{2/n}(k/\vol(\Omega))^{2/n}.
$$
Combining the last relation with inequality~\eqref{dp:aux2}, we get
$$
\nu_k(\Omega)\leqslant c_{7}(\kappa+\rad^{-2})+c_{11}(k/\vol(\Omega))^{2/n}.
$$
Taking into account both cases, we finish the proof of the theorem.
\qed
%

\subsection*{Acknowledgements}  
The authors are grateful to A. Savo for proposing the idea of  Example~\ref{counter}, as well as to B. Colbois and L. Polterovich for useful discussions. The project has originated out of a number of discussions the authors had while GK and AH were visiting the Centre de recherche math{\'e}matiques (CRM) in Montreal. During the work on the paper AH has been supported by the CRM-ISM postdoctoral fellowship and the postdoctoral programme at  the Max-Planck Institute for mathematics in Bonn. The support  and hospitality of both institutes is gratefully acknowledged. The research of  IP was partially supported by NSERC, FRQNT and Canada Research Chairs program.


\begin{thebibliography}{99}

\bibitem{Berd} Berdnikov,~A. {\it Bounds on multiplicities of Laplace eigenvalues on surfaces}, M.Sc. Thesis, Independent University of Moscow, 2015.

\bibitem{Be} Berezin, F. {\it Covariant and contravariant symbols of operators.} Izv. Akad. Nauk SSSR Ser. Mat. {\bf 13}
(1972), 1134--��1167.

\bibitem{Be80} Besson, G. {\it Sur la multiplicit\'e de la premi\`ere valeur propre des surfaces riemanniennes.} Ann. Inst. Fourier (Grenoble) {\bf 30} (1980), 109--128.

\bibitem{BBI} Burago,~D., Burago,~Y., Ivanov,~S. {\it A course in metric geometry.} Graduate Studies in Mathematics, 33. American Mathematical Society, Providence, RI, 2001. xiv+415 pp.

\bibitem{Bu79} Buser, P. {\it Beispiele f\"ur $\lambda_1$ auf kompakten Mannigfaltigkeiten.} Math. Z. {\bf 165} (1979), 107--133. 

\bibitem{Bu82} Buser, P. {\it A note on the isoperimetric constant.} Ann. Sci. \'Ecole Norm. Sup. (4) {\bf 15} (1982), 213--230. 

\bibitem{Cha}  Chavel, I. {\it Riemannian geometry. A modern introduction.} Second edition. Cambridge Studies in Advanced Mathematics, {\bf 98}. Cambridge University Press, Cambridge, 2006. xvi+471 pp.

\bibitem{CC96} Cheeger, J., Colding, T.~H. {\it Lower bounds on {R}icci curvature and the almost rigidity of warped products.}  Ann. of Math. (2), {\bf 144} (1996), 189--237.

\bibitem{CY07} Cheng, Q.-M., Yang, H. {\it Bounds on eigenvalues of {D}irichlet {L}aplacian.} Math. Ann. {\bf 337} (2007), 159--175.

\bibitem{CY} Cheng, Q.-M., Yang, H. {\it Estimates for eigenvalues on Riemannian manifolds.} J. Diff. Eq. {\bf 247} (2009), 2270--2281.

\bibitem{Che75} Cheng, S. {\it Eigenvalue comparison theorems and its geometric applications.} Math. Z. {\bf 143} (1975), 289--297. 

\bibitem{Che76} Cheng, S. {\it Eigenfunctions and nodal sets.} Comment. Math. Helv. {\bf 51} (1976), 43--55.

\bibitem{CC} Colbois, B., Colin de Verdi\`ere, Y. {\it Sur la multiplicit\'e de la premi\`ere valeur propre d'une surface de Riemann \`a courbure constante}, Comment. Math. Helv. {\bf 63} (1988), 194-208.

\bibitem{CM08}  Colbois, B., Maerten, D. {\it Eigenvalues estimate for the Neumann problem of a bounded domain.} J. Geom. Anal. {\bf 18} (2008), 1022--1032. 

\bibitem{CS11} Colbois,~B., Savo,~A. {\it Large eigenvalues and concentration.} Pacific J. Math. {\bf 249} (2011), 271--290.

\bibitem{CdV86} Colin de Verdi\`ere,~Y. {\it Sur la multiplicit\'e de la premi\`ere valeur propre non nulle du Laplacien.} Comment. Math. Helv. {\bf 61} (1986), 254--270.

\bibitem{CdV} Colin de Verdi\`ere, Y. {\it Construction de laplaciens dont une partie finie du spectre est donn\'ee.} Ann. Sci. \'Ecole Norm. Sup. (4) {\bf 20} (1987), 599--615.


\bibitem{Cro80} Croke, C. {\it Some isoperimetric inequalities and eigenvalue estimates.} Ann. Sci. \'Ecole Norm. Sup. (4) {\bf 13} (1980), 419--435. 

\bibitem{DL82} Donnelly,~H., Li,~P. {\it Lower bounds for eigenvalues of Riemannian manifolds.} Michigan Math. J. {\bf 29} (1982), 149--161.

\bibitem{FT00} Friedman,~J., Tillich, ~J.-P.  {\it Laplacian eigenvalues and distances between subsets of a manifold.} J. Differential Geom. {\bf 56} (2000), 285--299.

\bibitem{Gro} Gromov, M. {\it Metric structures for Riemannian and non-Riemannian spaces.} Based on the 1981 French original. With appendices by M. Katz, P. Pansu and S. Semmes. Translated from the French by Sean Michael Bates. Progress in Mathematics, {\bf 152}. Birkh\"auser Boston, Inc., Boston, MA, 1999. xx+585 pp.

\bibitem{HMN} Hoffmann-Ostenhof, T., Michor, P. W., Nadirashvili, N. {\it Bounds on the multiplicity of eigenvalues for fixed membranes.} Geom. Funct. Anal. {\bf 9} (1999), 1169��--1188.

\bibitem{Ho} H\"ormander, L. {\it The spectral function of an elliptic operator.} Acta Math. {\bf 121} (1968), 193--218.

\bibitem{JMS} Jones, P., Maggioni, M., Schul, R. {\it Manifold parametrizations by eigenfunctions of the Laplacian and heat kernels}, Proc. Nat. Acad. Sc. {\bf 105} (2008), 1803--1808.

\bibitem{KKP} Karpukhin, M., Kokarev, G., Polterovich, I. {\it Multiplicity bounds for Steklov eigenvalues on Riemannian surfaces.} Ann. Inst. Fourier {\bf 64} (2014), 2481--2502.

\bibitem{GK} Kokarev,~G. {\it On multiplicity bounds for Schrodinger eigenvalues on Riemannian surfaces.} Anal. PDE {\bf 7} (2014), 1397--1420.

\bibitem{Kr} Kr\"oger, P. {\it On upper bounds for high order Neumann eigenvalues in Euclidean space}, Proc. Amer. Math. Soc. {\bf 127} (1999), 1665--1669.

\bibitem{LY80} Li, P., Yau, S.-T. {\it Estimates of eigenvalues of a compact Riemannian manifold.} Geometry of the Laplace operator (Proc. Sympos. Pure Math., Univ. Hawaii, Honolulu, Hawaii, 1979), pp. 205--239, Proc. Sympos. Pure Math., XXXVI, Amer. Math. Soc., Providence, R.I., 1980.

\bibitem{LY83} Li, P., Yau, S.-T., {\it On the Schr\"odinger equation and the eigenvalue problem},
Comm. Math. Phys. {\bf 88} (1983),  309--��318.

\bibitem{LY86} Li, P., Yau, S.-T. {\it On the parabolic kernel of the Schrodinger operator.} Acta Math. {\bf 156} (1986), 153--201.

\bibitem{Lohk} Lohkamp, J. {\it Discontinuity of geometric expansions.} Comm. Math. Helv. {\bf 71} (1996), 213--228.

\bibitem{ML} Makarychev, Y., Lee, J. {\it Eigenvalue multiplicity and volume growth.} arXiv:0806.1745.

\bibitem{Na87}  Nadirashvili, N. {\it Multiple eigenvalues of the Laplace operator.} (Russian) Mat. Sb. (N.S.) {\bf 133(175)} (1987), 223--237, 272; translation in Math. USSR-Sb. {\bf 61} (1988), 225--238. 

\bibitem{SV} Safarov,~Y., Vassiliev,~D. {\it The asymptotic distribution of eigenvales for partial differential operators}, Translations of Mathematical Monographs \textbf{155}, Amer. Math. Soc., Providence, 1997. xiv+354 pp.

\bibitem{SC02} Saloff-Coste, L. {\it Aspects of Sobolev-type inequalities.} London Mathematical Society Lecture Note Series, {\bf 289}. Cambridge University Press, Cambridge, 2002. x+190 pp.

\bibitem{S} Savo A., {\it Private communication}, 2015.

\bibitem{Wang97}  Wang, J. {\it Global heat kernel estimates.} Pacific J. Math. {\bf 178} (1997), 377--398. 

\bibitem{ZY84} Zhong, J. Q., Yang, H. C. {\it On the estimate of the first eigenvalue of a compact Riemannian manifold.}
Sci. Sinica Ser. A {\bf 27} (1984), 1265--1273. 

\end{thebibliography}
\end{document}